\theoremstyle{plain} 
\newtheorem{theorem}{Theorem}
\newtheorem{lemma}{Lemma}
\newtheorem{corollary}{Corollary}
\theoremstyle{definition} 
\newtheorem{remark}{Remark}
\newcommand{\be}{\begin{eqnarray}}
\newcommand{\ee}{\end{eqnarray}}
\newcommand{\bn}{\begin{eqnarray*}}       
\newcommand{\en}{\end{eqnarray*}}
\newcommand{\beq}{\begin{equation}}
\newcommand{\eeq}{\end{equation}}
\newcommand{\R}{\mathbb{R}}
\newcommand{\N}{\mathbb{N}}
\newcommand{\eps}{\varepsilon}
\DeclareMathOperator{\conv}{conv}
\DeclareMathOperator{\dive}{div}
\DeclareMathOperator{\dist}{dist}
\newcommand{\ba}{\begin{array}}
\newcommand{\ea}{\end{array}}
\newcommand{\pd}[2]{\frac{\partial #1}{\partial #2}}
\newcommand{\pdt}[2]{\tfrac{\partial #1}{\partial #2}}
\newcommand{\bpm}{\begin{pmatrix}}
\newcommand{\epm}{\end{pmatrix}}
\begin{document}

\title{A computer-assisted existence proof for Emden's equation on an unbounded L-shaped domain}
\author{Filomena Pacella, Michael Plum and Dagmar Rütters}
\date{}
\maketitle
\begin{abstract}
We prove existence, non-degeneracy, and exponential decay at infinity of a non-trivial solution to Emden's equation $-\Delta u = | u |^3$ on an unbounded $L$-shaped domain, subject to Dirichlet boundary conditions. Besides the direct value of this result, we also regard this solution as a building block for solutions on expanding bounded domains with corners, to be established in future work. Our proof makes heavy use of computer assistance: Starting from a numerical approximate solution, we use a fixed-point argument to prove existence of a near-by exact solution. The eigenvalue bounds established in the course of this proof also imply non-degeneracy of the solution.
\end{abstract}

\section{Introduction}
In this paper we are concerned with the existence of non-trivial solutions of the problem
\be\label{In1}
\left\{ \begin{array}{rcll}
-\Delta u &=& |u|^3 & {\rm in~} \Omega\\
u &=& 0 & {\rm on~} \partial\Omega \end{array} \right.
\ee
in a planar $L$-shaped unbounded domain
\bn
\Omega = ((-1,\infty) \times (0,1)) \cup ((-1,0) \times (-\infty,1)) \subset \R^2.
\en
It is obviously of great importance, both theoretically and for applications, not only to find a solution but also to detect its shape and other qualitative properties. In particular, for domains with corners, it is very interesting to find solutions which are localized at the corners as, often, one can guess from applications or from energy considerations.

Here, by computer-assistance, we prove the existence of such a solution which is close to an approximate numerically computed solution with the desired shape. The method and the precise result will be stated in the next section.

Moreover, solutions in unbounded domains can be used to find solutions in bounded domains by perturbative methods. Roughly speaking in some nonlinear parameter dependent elliptic problems in bounded domains which, as the parameter goes to a limit value, ``tend'' to a similar problem in an unbounded domain, one can use the solution in the unbounded domain to construct solutions of the original problem. This is usually done by ``gluing'' together several ``copies'' of suitable truncations of the solution in the unbounded domain. This technique usually requires two main properties of the solution in the unbounded domain:
\be\label{InI}
&\hspace*{-8,7cm}{\rm i)} & {\rm ~it~ must~ be~ nondegenerate,}\\ \nonumber
&\hspace*{-8,7cm}{\rm ii)} & {\rm ~it~ should~ have~ a~ suitably ~ fast ~ decay ~at~ infinity.}
\ee
Some typical cases, widely analyzed in the literature, are so-called singularly-perturbed problems or problems with critical  nonlinearities. In a similar way a solution of \eqref{In1} can be used to find solutions in expanding bounded tubular domains.

Let us be more precise. Let $M$ be a compact $k$-dimensional piecewise smooth submanifold of $\R^N$ without boundary, $1 \le k \le N-1, ~ N \ge 2$. For $R>0$ define the expanded manifold $M_R = \{ Rx,x \in M\}$ and denote by $\Omega_R$ its open tubular neighborhood of radius 1. Then consider the problem 
\be\label{In3}
\left\{ \begin{array}{rcll}
-\Delta u &=& f(u)& {\rm in~} \Omega_R\\
u &=& 0 & {\rm on~} \partial\Omega_R \end{array} \right.
\ee
for some nonlinear $C^1$-function $f$. The equation \eqref{In3} appears e.g. in nonlinear optics and models standing waves in optical waveguides. The most interesting variant for applications which exploits the nonlinear properties of the material is the {\it self-focusing} case where $f(u)/u \to \infty$ as $|u| \to \infty$. A typical example is given by $f(u) = u^3$, modelling Kerr's effect. For more information on the physical background see for example \cite{Su-Su}.

When $k=1$ then $\Omega_R$ is a tubular guide, i.e. an optical fiber and the flat case, $N=2$, is an interesting one. When $k\ge 2$ and $M$ is regular and symmetric, in particular $\Omega_R$ is an expanding annuli, some existence results have been found using variational methods (\cite{CW}, \cite{BCGP}, \cite{YYL}, \cite{Su}). 

The first paper which deals with the nonsymmetric case, but always assuming $M$ to be regular, is \cite{DY}, where solutions are found in the form
\be\label{In4}
u_R = \sum\limits^{m}_{i=1} U_{X_{R,i}, R} + o (1)
\ee
in $H^1 (\R^N)$ as $R \to \infty$, where $U_{X_{R,i}, R}$ are solutions of the ``local'' limit problem in an open unbounded cylinder centered at suitable points $X_{R,i},~ i = 1, \dots, m$ of the domain $\Omega_R$. Solutions of the form \eqref{In4} are usually called ``multibump'' solutions. This result was improved in \cite{Ack-Cla-Pa1} for positive solutions and extended then to sign-changing solutions in \cite{Ack-Cla-Pa2} in the case $k=1$, but always starting from regular manifolds.

If the original $1$-dimensional manifold has some ``corners'' one would expect the existence of similar ``multibump'' solutions, but with (some) bumps localized at the corners. In order to find such a type of solutions the main thing is to have a solution of a limit problem in an unbounded domain with a corner which is precisely localized at a corner. More precisely, if the $1$-dimensional manifold $M$ is a piecewise regular manifold in the plane given by the union of a finite number of smooth curves which intersect orthogonally one could use the solution we find in the $L$-shaped domain $\Omega$ to construct a multibump solution of type \eqref{In4} with the bumps (positive or negative) at the corners.

It is important at this point to stress what we have already mentioned in \eqref{InI}, i.e. that in order to use the solution $u$ in the unbounded $L$-shaped domain as building block for solutions in expanding bounded domains with corners, we need the two main properties i) and ii) (see \eqref{InI}). In other words the solution $u$ must be nondegenerate and decay exponentially at infinity.

In this paper we also prove both properties of the solution found by a computer-assisted proof and we compute its Morse index (which is one). We believe that all these results are interesting in themselves.

Finally, let us explain briefly the numerical and analytical techniques that we use for our computer-assisted proof. We start with an approximate solution $\omega \in H_0^1 (\Omega)$ to problem \eqref{In1} which we compute by a Newton iteration combined with a finite element method; in order to overcome the singularity problems at the re-entrant corner $(0,0)$ of $\Omega$, we also involve the associated singularity function into the approximation process. In this first approximative step, there is no need for any mathematical rigor.

Now we put up a boundary value problem for the error $v = u-w$, which we re-write as a fixed-point equation involving the residual $-\Delta \omega - |\omega|^3$ of the approximate solution, and the inverse of the linearization $L_{\omega} = - \Delta - 3 | \omega | \omega$. Now Banach's Fixed-Point Theorem gives the existence of a solution to problem \eqref{In1} in some ``close'' $H_0^1(\Omega)$-neighborhood of $\omega$, provided that the residual is sufficiently small (measured in $H^{-1} (\Omega)$) and $\| L_{\omega}^{-1} \|_{H^{-1} (\Omega) \to H_0^1 (\Omega)}$ is ``moderate''; the precise conditions are formulated in Theorem 1.

For computing a rigorous bound to the residual we use an additional $H({\rm div},\Omega)$-approximation to $\nabla \omega$ (see Section 4) and rigorous bounds to some integrals. A norm bound for $L_{\omega}^{-1}$ is obtained via spectral bounds for $L_{\omega}$: The essential spectrum can be bounded by simple Rayleigh quotient estimates, and isolated eigenvalues below the essential spectrum are enclosed by variational methods and additional computer-assisted means, supplemented by a homotopy method for obtaining some necessary spectral a priori information.

By related computer-assisted techniques we have been able to prove existence, multiplicity, and also uniqueness statements for various kinds of boundary and eigenvalue problems; see e.g. \cite{BehMerPlWie,Breuerbeam,HoangPlWie,McKenPacPlR,MR3203775,PlumDMV}.

The paper is organized as follows. In Section 2 we formulate the basic theorem (Theorem 1) for our computer-assisted proof. Section 3 contains a description of the numerical methods used to obtain an approximate solution $\omega$, and Section 4 the essential estimates for obtaining a residual bound. Section 5 is devoted to the spectral estimates for $L_{\omega}$ needed to bound $\| L_{\omega}^{-1} \|_{H^{-1} (\Omega) \to H_0^1 (\Omega)}$. Section 6 contains some more numerical details and our computational results proving the existence of a solution to problem \eqref{In1} together with a close $H_0^1 (\Omega)$-error bound. Based on the spectral information obtained as a ``side product'' of our computer-assisted proof, we prove nondegeneracy of the solution in Section 7. Finally, Section 8 contains a proof of exponential decay of the solution. 


\section{Existence and Enclosure Theorem}

Let $\Omega=\left ((-1,\infty)\times(0,1)\right)\cup\left ((-1,0)\times(-\infty,1)\right )$, i.e. $\Omega$ is an
unbounded L-shaped
domain. Let $H_0^1(\Omega)$ be endowed with the inner product $\langle
u,v\rangle_{H_0^1}:=\langle\nabla u,\nabla v\rangle_{L^2} +\langle u,v\rangle_{L^2}$, and let $H^{-1}(\Omega)$
denote the dual space of $H_0^1(\Omega)$, equipped with the usual operator sup-norm. We consider the problem
\beq
\label{bvp}
\left \{\begin{array}{rcll} -\Delta u & = & |u|^3 & \text{in }\Omega\\ u & = & 0 & \text{on }\partial\Omega.\end{array}
\right .
\eeq
Our goal is to prove existence of a non-trivial solution by computer-assistance.

We assume that $\omega\in H_0^1(\Omega)$ is an approximate solution to \eqref{bvp} (computed numerically) and that constants $\delta$ and $K$
are known  such that
\begin{itemize}
\item[(a)] $\delta$ bounds the defect (residual) of the approximate solution in the $H^{-1}$-norm, i.e.
\be
\label{deltacond}
\|-\Delta \omega-|\omega|^3\|_{H^{-1}}\leq\delta,
\ee
\item[(b)] $K$ bounds the inverse of the linearization at $\omega$, i.e.
\be
\label{Kcond}
\|v\|_{H_0^1}\leq K\left \|L_\omega[v]\right \|_{H^{-1}}\quad\text{for all }v\in H_0^1(\Omega),
\ee
where $L_\omega:H_0^1(\Omega)\to H^{-1}(\Omega),\ L_\omega[v]=-\Delta v-3|\omega|\omega v$ denotes the linearized
operator.
\end{itemize}
Note that condition \eqref{Kcond} implies that $L_\omega$ is one-to-one. We will also need that $L_\omega$
is
onto. For proving this we use the canonical isometric isomorphism $\Phi:H_0^1(\Omega)\to H^{-1}(\Omega)$ given by
\be
\label{Phidef}
(\Phi[u])(v):=\langle u,v\rangle_{H_0^1}\quad(u,v\in H_0^1(\Omega))
\ee
and show that 
\begin{itemize}
\item[(i)] $\left (\Phi^{-1}L_\omega\right)(H_0^1(\Omega))$ is dense in $H^1_0(\Omega)$ (implying that
$L_\omega(H_0^1(\Omega))\subset H^{-1}(\Omega)$ is dense),
\item[(ii)] $L_\omega(H_0^1(\Omega))\subset H^{-1}(\Omega)$ is closed.
\end{itemize}
For proving (i) we first note that $\Phi^{-1}L_\omega:H_0^1(\Omega)\to H_0^1(\Omega)$ is symmetric w.r.t. $\langle
\cdot,\cdot\rangle_{H_0^1}$:
\begin{align}
\langle \Phi^{-1}L_\omega[u],v\rangle_{H_0^1} & \stackrel{\eqref{Phidef}}{= }
\left(L_\omega[u]\right)[v] =\int_\Omega\nabla u\cdot\nabla v-3|\omega|\omega uv\,dx.
\label{PhiLsymm}
\end{align}
Let now $u\in H_0^1(\Omega)$ be an element of the orthogonal complement of $(\Phi^{-1}L_\omega)(H_0^1(\Omega))$, i.e. we
have $$0=\langle u,\Phi^{-1}L_\omega[v]\rangle_{H_0^1}\stackrel{\text{symmetry}}{=}
\langle \Phi^{-1}L_\omega[u],v\rangle_{H_0^1}\quad\text{for all }v\in H_0^1(\Omega).$$
Therefore $\Phi^{-1}L_\omega[u]=0$, which implies $L_\omega[u]=0$ and, since
$L_\omega$ is one-to-one, $u=0$. Thus (i) follows.

To prove (ii), let $\left (L_\omega[u_n]\right)_{n\in\N}$ be a sequence in $L_\omega(H_0^1(\Omega))$ converging to some
$r \in H^{-1}(\Omega)$. Condition \eqref{Kcond} shows that $(u_n)_{n\in\N}$ is a Cauchy
sequence in $H_0^1(\Omega)$ and thus converges to some $u\in H_0^1(\Omega)$. Since $L_\omega$ is bounded we obtain
$L_\omega[u_n]\to L_\omega[u]\ \ (n\to\infty)$, which gives $r =L_\omega[u]\in L_\omega(H_0^1(\Omega))$ and therefore the closedness of
$L_\omega(H_0^1(\Omega))$ in $H^{-1}(\Omega)$.

\begin{theorem}
\label{maintheo}
Let $\omega\in H_0^1(\Omega)$ be an approximate solution to \eqref{bvp}, and $\delta$ and
$K$ constants such that \eqref{deltacond} and \eqref{Kcond} are satisfied. Let moreover $C_4>0$ be an embedding constant
for the embedding $H_0^1(\Omega)\hookrightarrow L^4(\Omega)$, and $\gamma:=3C_4^3$. \\Finally suppose that there exists
some $\alpha>0$ such that 
\beq
\label{alphacond1}
\delta\leq \frac{\alpha}{K}-\gamma\alpha^2\left (\|\omega\|_{L^4}+\tfrac{1}{3}C_4\alpha\right)
\eeq
and
\beq
\label{alphacond2}
2K\gamma\alpha\left (\|\omega\|_{L^4}+\tfrac 12C_4\alpha\right) <1.
\eeq
Then there exists a solution $u\in H_0^1(\Omega)$ to problem \eqref{bvp} such that
\beq
\label{encl}
\|u-\omega\|_{H_0^1}\leq \alpha.
\eeq
In particular, $u$ is non-trivial if $\alpha<\|\omega\|_{H_0^1}$.
\end{theorem}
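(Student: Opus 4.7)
The plan is to recast \eqref{bvp} as a fixed-point problem for the error $v:=u-\omega$ and apply Banach's Fixed-Point Theorem on the closed ball $B_\alpha:=\{v\in H_0^1(\Omega):\|v\|_{H_0^1}\le\alpha\}$. Since (a), (b), and the density/closedness arguments (i)--(ii) show that $L_\omega$ is a bijection with $\|L_\omega^{-1}\|_{H^{-1}\to H_0^1}\le K$, the equation $-\Delta(\omega+v)=|\omega+v|^3$ is equivalent to
\[
L_\omega[v]=-d+N(v),\qquad d:=-\Delta\omega-|\omega|^3,\quad N(v):=|\omega+v|^3-|\omega|^3-3|\omega|\omega\,v.
\]
I therefore introduce $T:H_0^1(\Omega)\to H_0^1(\Omega)$, $T(v):=L_\omega^{-1}[-d+N(v)]$; fixed points of $T$ correspond exactly to solutions $u=\omega+v$ of \eqref{bvp}.

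For the self-mapping step I will Taylor-expand the scalar map $g(s)=|s|^3$, which is $C^2$ with $g'(s)=3|s|s$ and $g''(s)=6|s|$. The integral-remainder form
\[
N(v)(x)=\int_0^1(1-t)\,6|\omega(x)+tv(x)|\,v(x)^2\,dt
\]
yields the pointwise bound $|N(v)|\le 3|\omega|v^2+|v|^3$. Testing against $\varphi\in H_0^1(\Omega)$ and applying H\"older's inequality (with exponents $4,2,4$ on the first term and $4/3,4$ on the second), together with the embedding $\|\cdot\|_{L^4}\le C_4\|\cdot\|_{H_0^1}$, gives
\[
\|N(v)\|_{H^{-1}}\le 3C_4^3\|\omega\|_{L^4}\|v\|_{H_0^1}^2+C_4^4\|v\|_{H_0^1}^3=\gamma\|v\|_{H_0^1}^2\bigl(\|\omega\|_{L^4}+\tfrac13 C_4\|v\|_{H_0^1}\bigr).
\]
Combined with $\|d\|_{H^{-1}}\le\delta$ and $\|L_\omega^{-1}\|\le K$ this produces $\|T(v)\|_{H_0^1}\le K\delta+K\gamma\alpha^2(\|\omega\|_{L^4}+\tfrac13 C_4\alpha)\le\alpha$ on $B_\alpha$, which is exactly the content of \eqref{alphacond1}.

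For the contraction step I invoke Fr\'echet differentiability of $N:H_0^1(\Omega)\to H^{-1}(\Omega)$, with $DN(v)[h]=[g'(\omega+v)-g'(\omega)]h$. The elementary inequality $|g'(s_1)-g'(s_2)|\le 3(|s_1|+|s_2|)|s_1-s_2|$ (verified case-by-case on the signs of $s_1,s_2$) yields $|DN(v)[h]|\le 3(2|\omega|+|v|)|v||h|$ pointwise, and the same H\"older/embedding argument as above shows
\[
\|DN(v)\|_{H_0^1\to H^{-1}}\le 2\gamma\|v\|_{H_0^1}\bigl(\|\omega\|_{L^4}+\tfrac12 C_4\|v\|_{H_0^1}\bigr).
\]
The integral form of the mean value theorem, applied on the convex set $B_\alpha$, then produces the Lipschitz constant $2K\gamma\alpha(\|\omega\|_{L^4}+\tfrac12 C_4\alpha)<1$, which is precisely \eqref{alphacond2}. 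Banach's theorem furnishes a unique fixed point $v\in B_\alpha$; setting $u:=\omega+v$ delivers a solution of \eqref{bvp} satisfying \eqref{encl}, and when $\alpha<\|\omega\|_{H_0^1}$ the reverse triangle inequality forces $\|u\|_{H_0^1}\ge\|\omega\|_{H_0^1}-\alpha>0$.

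The main difficulty is not conceptual but algebraic bookkeeping: the constants in the pointwise bounds on $N$ and $DN$ must match exactly the special forms $\gamma\alpha^2(\|\omega\|_{L^4}+\tfrac13 C_4\alpha)$ and $2\gamma\alpha(\|\omega\|_{L^4}+\tfrac12 C_4\alpha)$ appearing in \eqref{alphacond1}--\eqref{alphacond2}. Once the sharp pointwise estimates $|N(v)|\le 3|\omega|v^2+|v|^3$ and $|DN(v)[h]|\le 3(2|\omega|+|v|)|v||h|$ are in hand, the subsequent H\"older and embedding work is routine.
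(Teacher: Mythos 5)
Your proposal is correct and follows essentially the same route as the paper's cited proof (\cite{McKenPacPlR}): recast \eqref{bvp} as a fixed-point equation for the error $v=u-\omega$ using $L_\omega^{-1}$, then apply Banach's theorem on $B_\alpha$, with the self-mapping and contraction estimates driven by the pointwise bounds $|N(v)|\le 3|\omega|v^2+|v|^3$ and $\bigl||s_1|s_1-|s_2|s_2\bigr|\le(|s_1|+|s_2|)|s_1-s_2|$ (the latter being exactly the content of Lemma~\ref{l1}(i)), followed by H\"older and the $H_0^1\hookrightarrow L^4$ embedding. The constants you obtain match \eqref{alphacond1}--\eqref{alphacond2} precisely, and the final nontriviality claim via the reverse triangle inequality is also correct.
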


\begin{remark}
\begin{itemize}
\item[(a)] Let $\psi(\alpha)$ denote
the right-hand-side of \eqref{alphacond1}, which obviously attains a positive maximum on $[0,\infty)$. Thus
the existence of some $\alpha>0$ satisfying \eqref{alphacond1} is equivalent to 
\beq\label{dcond2}
\delta\leq\max\limits_{\alpha\in[0,\infty)}\psi(\alpha),
\eeq
which due to \eqref{deltacond} will be satisfied if the approximate solution $\omega$ is
computed with sufficiently high accuracy. Furthermore, a small defect bound $\delta$ will imply a small error bound $\alpha$ if $K$ is not too large.
\item[(b)] As shown in \cite{McKenPacPlR}, \eqref{alphacond1} implies \eqref{alphacond2} if $\delta$ satisfies \eqref{dcond2} with a strict inequality 
and $\alpha$ is chosen appropriately. 
\end{itemize}
\end{remark}

For the proof of Theorem \ref{maintheo} see \cite{McKenPacPlR}. It is based on Banach's Fixed Point Theorem and uses the following Lemma (see \cite[Lemma 3.1 and
3.2]{McKenPacPlR}), which in addition will be useful in a later section of this paper.
\begin{lemma}
\label{l1}
Let $p_1,p_2,p_3,p_4\in[2,\infty)$ such that $\frac 1{p_1}+\frac 1{p_2}+\frac 1{p_3}+\frac 1{p_4}=1$ and $C_{p_i}>0$ 
an embedding constant for the embedding $H_0^1(\Omega)\hookrightarrow L^{p_i}(\Omega)$, $i=1,\ldots,4$. Then (i) and (ii) hold true:
\begin{itemize}
\item[\rm{(i)}] For all $u,\tilde u,v\in H_0^1(\Omega)$:
$$\left \|\left [|u|u-|\tilde u|\tilde u\right ]v\right \|_{H^{-1}}\leq C_{p_3}C_{p_4}\left(\|u\|_{L^{p_1}}+\|\tilde
u\|_{L^{p_1}}\right )\|u-\tilde  u\|_{L^{p_2}}\|v\|_{H_0^1}.$$
\item[\rm{(ii)}]
Let $u,\tilde u \in H_0^1(\Omega)$, and let $L_u$ and $L_{\tilde{u}}$ denote the linearizations at $u$ and $\tilde{u}$, respectively. Suppose that for some $K>0$
$$\|v\|_{H_0^1}\leq K\|L_{\tilde u}[v]\|_{H^{-1}}\quad\text{for all }\ v\in H_0^1(\Omega)$$
and 
\beq\label{kappacond}
\kappa:=3C_{p_3}C_{p_4} K\left (\|u\|_{L^{p_1}}+\|\tilde u\|_{L^{p_1}}\right )\|u-\tilde u\|_{L^{p_2}}<1.
\eeq
Then,
$$\|v\|_{H_0^1}\leq\frac{K}{1-\kappa}\left \|L_u[v]\right\|_{H^{-1}}\quad\text{for all }\ v\in H_0^1(\Omega).$$
\end{itemize}
\end{lemma}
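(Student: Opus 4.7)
The plan is to prove (i) by a pointwise algebraic estimate and then deduce (ii) from (i) via a perturbation argument. For (i), I start from the real-variable inequality $\bigl||a|a - |b|b\bigr| \leq (|a|+|b|)\,|a-b|$ for all $a, b \in \R$: when $a, b$ share a sign this is exactly the factorization $|a^2-b^2| = (|a|+|b|)|a-b|$; when they differ in sign, $\bigl||a|a - |b|b\bigr| = a^2 + b^2 \leq (a-b)^2 = (|a|+|b|)|a-b|$. Applied pointwise and multiplied by $|v(x)||w(x)|$ for an arbitrary test function $w \in H_0^1(\Omega)$, this bounds the pairing $\bigl|\int_\Omega [|u|u - |\tilde u|\tilde u]\,v\,w\,dx\bigr|$ by $\int_\Omega (|u|+|\tilde u|)\,|u-\tilde u|\,|v|\,|w|\,dx$. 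A generalized H\"older inequality with exponents $p_1, p_2, p_3, p_4$, followed by the Sobolev embeddings $\|v\|_{L^{p_3}} \leq C_{p_3}\|v\|_{H_0^1}$ and $\|w\|_{L^{p_4}} \leq C_{p_4}\|w\|_{H_0^1}$, then yields
$$\Bigl|\int_\Omega[|u|u-|\tilde u|\tilde u]\,v\,w\,dx\Bigr| \leq C_{p_3}C_{p_4}\bigl(\|u\|_{L^{p_1}} + \|\tilde u\|_{L^{p_1}}\bigr)\|u-\tilde u\|_{L^{p_2}}\,\|v\|_{H_0^1}\,\|w\|_{H_0^1},$$
and taking the supremum over $\|w\|_{H_0^1} \leq 1$ in the dual characterization of the $H^{-1}$ norm gives (i).

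For (ii), the key observation is the operator identity $L_{\tilde u}[v] = L_u[v] + 3\bigl(|u|u - |\tilde u|\tilde u\bigr)v$ in $H^{-1}(\Omega)$, which is immediate from the definitions of the two linearizations. Combining the hypothesis $\|v\|_{H_0^1} \leq K\|L_{\tilde u}[v]\|_{H^{-1}}$ with the triangle inequality in $H^{-1}$ and part (i), I obtain
$$\|v\|_{H_0^1} \leq K\|L_{\tilde u}[v]\|_{H^{-1}} \leq K\|L_u[v]\|_{H^{-1}} + \kappa\,\|v\|_{H_0^1},$$
and since $\kappa < 1$ a simple rearrangement delivers the claim.

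The only genuine obstacle is the pointwise algebraic inequality driving (i); the rest is H\"older and triangle-inequality bookkeeping. One should also check that the hypotheses $p_i \in [2,\infty)$ and $\sum \tfrac{1}{p_i} = 1$ exactly ensure both that the four-factor H\"older inequality closes and that the planar Sobolev embeddings $H_0^1(\Omega) \hookrightarrow L^{p_i}(\Omega)$ (with finite constants $C_{p_i}$) are available -- a compatibility condition that is visibly satisfied.
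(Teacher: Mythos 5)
Your proof is correct and follows the standard route that the paper delegates to \cite[Lemmas 3.1, 3.2]{McKenPacPlR}: the pointwise inequality $\bigl||a|a-|b|b\bigr|\le(|a|+|b|)|a-b|$, the four-factor H\"older inequality, and the Sobolev embeddings give (i) via the dual characterization of the $H^{-1}$-norm; and (ii) follows from the splitting $L_{\tilde u}[v]=L_u[v]+3(|u|u-|\tilde u|\tilde u)v$ combined with (i) and absorption of the $\kappa$-term. This is the same argument, presented cleanly.
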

\begin{remark}
Note that, in \eqref{kappacond}, $\| u - \tilde{u}\|_{L^{p_2}}$ can be replaced by $C_{p_2} \| u - \tilde{u}\|_{H_0^1}$, which for the particular choice $p_1=p_2=p_3=p_4=4$ amounts to the condition
$$\kappa := \gamma K\left (\|u\|_{L^{4}}+\|\tilde u\|_{L^{4}}\right )\|u-\tilde u\|_{H_0^1}<1.$$
\end{remark}

\section{Computation of an approximate solution}

\label{approxsol}
Let $T>1$ and define $\Omega_0:=\Omega\cap(-T,T)^2$. Then $\Omega_0$ is bounded and contains
the corner-part of $\Omega$. Let $\omega_0\in H_0^1(\Omega_0)$ be an approximate solution of
\beq
\label{cbvp}
\left \{\begin{array}{rcll}
-\Delta u & = & |u|^3 & \text{in }\Omega_0 \\ u & = & 0 & \text{on }\partial\Omega_0.\end{array}\right .
\eeq
Then
\beq
\label{omdef}
\omega=\left \{\begin{array}{ccc} \omega_0 & \text{in} & \Omega_0\\
0 & \text{in} & \Omega\backslash\Omega_0\end{array}\right .
\eeq
is in $H_0^1(\Omega)$ and turns out to be a good approximate solution of \eqref{bvp} if $T$ is chosen large enough and
$\omega_0$ is sufficiently accurate. Indeed our numerical results  show that $T=3$ is sufficient. 

An approximate solution of \eqref{cbvp} can be computed using finite elements and a Newton iteration. As an initial guess,
an appropriate multiple of the first eigenfunction of $-\Delta$ in $(-1,0)\times(0,1)$ with homogeneous Dirichlet boundary
conditions and extended by zero to $\Omega_0\backslash((-1,0)\times(0,1))$ can be used. However, due to
the re-entrant corner at $0$, approximations obtained with finite elements alone do not yield a sufficiently small
defect. So in addition we use a corner singular function, which allows us to write the exact solution of
\eqref{cbvp}
as a sum of a singular part and a regular part in $H^2(\Omega_0)$ (see \cite{GrisvNonsm} and \cite{GrisvSing}): 
We introduce polar coordinates $(r,\varphi)$, where $r=|x|$ and $\varphi$ ranges between $0$ and
$\theta:=\frac{3\pi}2$, with $\varphi=0$ for $x>0,y=0$ and $\varphi=\theta$ for $x=0,y<0$. On $\overline\Omega_0$ we
define
\beq
\label{gammadef}
\gamma(r,\varphi):=r^{\frac 23}\sin(\tfrac 23\varphi).
\eeq
Obviously, $\gamma(r,0)=\gamma(r,\theta)=0$ and one can easily check that $\Delta\gamma =0$ in $\Omega_0$. We choose
some fixed function $\lambda\in H^2(\Omega_0)\cap C^1(\overline\Omega_0)$ which vanishes on the part of $\partial\Omega_0$
where $\gamma$ does not vanish and satisfies $\lambda(0)=1,\ \nabla\lambda(0)=0$. With $w:=\lambda\gamma\in H_0^1(\Omega_0)$,
a solution $u\in H_0^1(\Omega_0)$ to \eqref{cbvp} can then be written as
\beq
\label{singsplit}
u = aw+v,
\eeq

where $v\in H^2(\Omega_0)\cap H_0^1(\Omega_0)$ is the regular part and $a\in\R$ is the so-called
stress-intensity-factor.
Using the dual singular function $\Gamma(r,\varphi):=r^{-\tfrac 23}\sin(\tfrac 23\varphi)$ we can represent $a$
by means of the solution $u$, i.e. we have
\beq
\label{aeq}
a=\frac 1\pi\left (\int_{\Omega_0} (\Lambda\Gamma)|u|^3\,+\Delta(\Lambda\Gamma)u\,dx\right),
\eeq
where $\Lambda\in H^2(\Omega_0)\cap C^1(\overline\Omega_0)$ is a cutoff function with similar properties as $\lambda$
and such that $\Delta(\Lambda\Gamma)\in L^2(\Omega_0)$. A suitable choice for $\lambda$ and $\Lambda$ is e.g. given by $\lambda(x,y)=\Lambda(x,y)=(1-x^2)^2(1-y^2)^2\chi_{(-1,1)^2}(x,y)$ for $(x,y)\in\overline\Omega_0$.
 
Clearly, a computation of the exact stress-intensity-factor by \eqref{aeq} is impossible since the exact
solution $u$ is unknown. For our purpose - the improvement of the approximate solution - it is however sufficient to know only an
approximation of $a$. So let a finite element function $\tilde\omega_0$ (computed without separate singular part) be an
approximate solution of \eqref{cbvp}.
Inserting $\tilde\omega_0$ into \eqref{aeq} yields an approximate stress-intensity-factor
$$\tilde a:=\frac 1\pi\int_{\Omega_0} (\Lambda\Gamma) |\tilde \omega_0|^3+\Delta(\Lambda\Gamma)\tilde \omega_0\,dx.$$
The regular part $v=u-aw\in H^2(\Omega_0)\cap H_0^1(\Omega_0)$ of a solution $u$ to \eqref{cbvp} satisfies
\beq
\label{regparteq}
-\Delta v=|aw+v|^3+a\Delta w
\eeq
and thus an approximate regular part $\tilde v$ (in the finite element space) can be computed using a Newton iteration
with initial guess $v_0=\tilde\omega_0-\tilde aI(w)$, where $I$ denotes the interpolation operator for the finite
element space. Our new approximate solution to \eqref{cbvp} is then given by
\beq
\label{newapp}
\omega_0 =\tilde aw+\tilde v,
\eeq
which yields an approximate solution $\omega$ to \eqref{bvp} by \eqref{omdef}.

\section{Computation of a residual bound}
Let $\tilde\rho\in H(\dive,\Omega)=\left\{ v\in (L^2(\Omega))^2:\ \dive v\in L^2(\Omega)\right \}$ be an
approximation of $\nabla\omega$, such that $\tilde \rho$ is also an approximate solution of $\dive\rho=-|\omega|^3$. We comment on the computation of $\tilde\rho$ in subsection \ref{defectcompnum}.
Then
we can estimate:
\begin{align*}
\|-\Delta\omega-|\omega|^3\|_{H^{-1}}& \leq \|-\dive(\nabla\omega-\tilde \rho)\|_{H^{-1}}+\|-\dive\tilde
\rho-|\omega|^3\|_{H^{-1}}\\
& \leq \|\nabla\omega-\tilde \rho\|_{L^2}+C_2\|-\dive\tilde \rho-|\omega|^3\|_{L^2},
\end{align*}
with $C_2$ denoting an embedding constant for the embedding $H_0^1(\Omega)\hookrightarrow L^2(\Omega)$ and hence also for 
$L^2(\Omega)\hookrightarrow H^{-1}(\Omega)$. Now we are left to compute upper bounds just for
integrals, which due to the splitting of the approximate solution into singular and regular part is however still technically a bit
challenging. We will comment on this in section \ref{defectcompnum}.

\begin{remark}
If $-\Delta \omega-|\omega|^3\in L^2(\Omega)$ (which requires higher smoothness of the approximate solution), one can
also use the embedding $L^2(\Omega)\hookrightarrow H^{-1}(\Omega)$ directly to compute an upper bound for the residuum:
$$\|-\Delta\omega-|\omega|^3\|_{H^{-1}}\leq C_2\|-\Delta\omega-|\omega|^3\|_{L^2}.$$
Again, it remains to compute bounds for an integral.
\end{remark}

\section{Computation of K}
We use the isometric isomorphism $\Phi:H_0^1(\Omega)\to H^{-1}(\Omega)$, defined in \eqref{Phidef}, to obtain
$$\|L_\omega[u]\|_{H^{-1}}=\|(\Phi^{-1}L_\omega)[u]\|_{H_0^1}\qquad (u\in H_0^1(\Omega))$$
and therefore
\beq
\label{KcondH10}
\|v\|_{H_0^1}\leq K\|L_\omega[v]\|_{H^{-1}}\quad (v\in H_0^1(\Omega))\iff \|v\|_{H_0^1}\leq
K\|(\Phi^{-1}L_\omega)[v]\|_{H_0^1}\quad (v\in H_0^1(\Omega)).
\eeq
Moreover, \eqref{PhiLsymm} already showed that $\Phi^{-1}L_\omega:H_0^1(\Omega)\to H_0^1(\Omega)$ is symmetric
and due to its definition on the whole space $H_0^1(\Omega)$ therefore self-adjoint. Thus \eqref{KcondH10} holds for any
\beq
\label{Kcondmin}
K\geq\frac 1{\min\{|\nu|:\ \nu \text{ is in the spectrum of }\Phi^{-1}L_\omega\}},
\eeq
provided the minimum is positive. We are therefore left to compute bounds for the essential spectrum of the operator
$\Phi^{-1}L_\omega$ as well as bounds for the eigenvalues of finite multiplicity which are closest to $0$. We first draw our attention to the
essential spectrum.

Consider the operator $L_0:H_0^1(\Omega)\to H^{-1}(\Omega),\ v\mapsto -\Delta v+\left
(\frac{\pi^2}{\pi^2+1}\chi_{\Omega_1}\right )v$, with $\chi_{\Omega_1}$ denoting the characteristic function on
$\Omega_1:=(-1,0)\times(0,1)$. Since both $\omega$ and $\chi_{\Omega_1}$ have compact support and are bounded,
$\Phi^{-1}L_\omega-\Phi^{-1}L_0:H_0^1(\Omega)\to H_0^1(\Omega)$ is compact, and hence
$\sigma_{\text{ess}}(\Phi^{-1}L_\omega)=\sigma_{\text{ess}}(\Phi^{-1}L_0)$ due to a well-known perturbation result
\cite{Kato}. To bound $\sigma_{\text{ess}}(\Phi^{-1}L_0)$ we consider Rayleigh quotients: $\Omega\backslash\Omega_1$
is the union of two disjoint semi-infinite strips, on each of which the Rayleigh quotient $\tfrac{\|\nabla
u\|_{L^2}^2}{\|u\|_{L^2}^2}$ is bounded from below by $\pi^2$. Hence, for each $u\in H_0^1(\Omega)$,
\beq
\label{specest1}
\int_{\Omega\backslash\Omega_1}|\nabla u|^2\,dx\geq\frac{\pi^2}{\pi^2+1}\int_{\Omega\backslash\Omega_1} \left
[|\nabla u|^2+u^2\right ]\,dx.
\eeq
Furthermore, trivially
\beq
\label{specest2}
\int_{\Omega_1}\left [|\nabla u|^2+\tfrac{\pi^2}{\pi^2+1}u^2\right ]\,dx\geq \frac{\pi^2}{\pi^2+1}\int_{\Omega_1}
\left [|\nabla u|^2+u^2\right ]\,dx
\eeq
holds. Adding \eqref{specest1} and \eqref{specest2} gives, for each $u\in H_0^1(\Omega)$,
$$\int_\Omega\left [|\nabla u|^2+\left (\tfrac{\pi^2}{\pi^2+1}\chi_{\Omega_1}\right )u^2\right ]\,dx 
\geq \frac{\pi^2}{\pi^2+1}\langle u,u\rangle_{H_0^1},$$
and the left-hand side equals $\langle \Phi^{-1}L_0u,u\rangle_{H_0^1}$. So the Rayleigh quotient, and hence the
spectrum, and in particular the essential spectrum of $\Phi^{-1}L_0$ is bounded from below by
$\tfrac{\pi^2}{\pi^2+1}$. Hence also $\sigma_{\text{ess}}(\Phi^{-1}L_\omega)\subset\left
[\frac{\pi^2}{\pi^2+1},\infty\right )$.

For analyzing eigenvalues of $\Phi^{-1}L_\omega$ we note that, for $(\nu,u)\in\R\times H_0^1(\Omega)$, $\nu \neq 1$,
\begin{align}
(\Phi^{-1}L_\omega)[u]=\nu u& \iff L_\omega[u]=\nu\Phi[u]\nonumber\\
& \iff -\Delta u-3|\omega|\omega u=\nu(-\Delta u+ u)\nonumber\\
& \iff (1-\nu)(-\Delta u+u)=(1+3|\omega|\omega)u\nonumber\\
& \iff (-\Delta u+ u)=\underbrace{\frac 1{1-\nu}}_{=:\kappa}(1+3|\omega|\omega)u,
\nonumber \\
& \iff \label{wevp}
\underbrace{\int_\Omega\left [\nabla u\cdot\nabla\varphi+ u\varphi\right ]\,dx}_{=\langle u,\varphi\rangle_{H_0^1}}=
\kappa\underbrace{\int_\Omega(1+3|\omega|\omega)u\varphi\,dx}_{=:N(u,\varphi)}\quad\text{for all }\varphi\in
H_0^1(\Omega),
\end{align}
which gives a new eigenvalue problem avoiding $\Phi^{-1}$, with spectral parameter $\kappa$. $N$ is a symmetric
bilinear form on $H_0^1(\Omega)$ and due to the positivity of $\omega$, which can be proved by interval evaluations, also positive definite. Therefore, $1-\nu>0$ for all possible eigenvalues $\nu$ and we are now left to compute
upper and lower bounds for eigenvalues $\kappa$ of \eqref{wevp} neighbouring $1$. 
Defining the essential spectrum of \eqref{wevp} in the usual way to be the one of its associated self-adjoint operator
$R=\left (I_{H_0^1}-\Phi^{-1}L_\omega\right)^{-1}$, we see that it is bounded from below by 
$(1-\min\sigma_{\text{ess}}(\Phi^{-1}L_\omega))^{-1}\geq\pi^2+1$.

The following theorem is well known and provides an easy and efficient way for computing upper bounds to eigenvalues below the essential spectrum, and hence (here) in particular to eigenvalues below $\pi^2+1$:
\begin{theorem}[{\bf Rayleigh-Ritz}]
\label{rayleighritz}
Let $v_1,\ldots, v_n\in H_0^1(\Omega)$ be linearly independent and define the matrices
$$A_0:=\left (\langle v_i,v_j\rangle_{H_0^1}\right)_{i,j=1,\ldots,n},\qquad 
A_1:=\left (N(v_i,v_j)\right)_{i,j=1,\ldots,n}.
$$
Denote by $\Lambda_1\leq\ldots\leq\Lambda_n$ the eigenvalues of $A_0x=\Lambda A_1x$ and suppose 
that $\Lambda_n<\pi^2+1$. Then there are at least $n$ eigenvalues of \eqref{wevp} below $\pi^2+1$, and the $n$ smallest of these, ordered by magnitude, satisfy
$$\kappa_i\leq\Lambda_i,\quad i=1,\ldots, n.$$
\end{theorem}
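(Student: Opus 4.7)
The plan is to invoke the Courant--Fischer min-max characterization of eigenvalues of a self-adjoint operator lying below its essential spectrum. First I would reformulate \eqref{wevp} abstractly: since $N$ is a bounded, symmetric, positive definite bilinear form on $H_0^1(\Omega)$, it induces a bounded self-adjoint operator $T:H_0^1(\Omega)\to H_0^1(\Omega)$ via $\langle Tu,\varphi\rangle_{H_0^1}=N(u,\varphi)$, and the eigenvalues $\kappa$ of \eqref{wevp} are precisely the reciprocals of the nonzero eigenvalues of $T$, equivalently the eigenvalues of the self-adjoint operator $R=(I_{H_0^1}-\Phi^{-1}L_\omega)^{-1}$ already noted, whose essential spectrum is bounded below by $\pi^2+1$.

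With this in place, the standard min-max principle yields that the quantities
\[
\kappa_i^{\ast}:=\inf_{\substack{V\subset H_0^1(\Omega)\\\dim V=i}}\sup_{0\neq u\in V}\frac{\|u\|_{H_0^1}^2}{N(u,u)}\qquad(i=1,2,\dots)
\]
have the following crucial property: whenever $\kappa_i^{\ast}<\pi^2+1$, the infimum is attained, at least $i$ eigenvalues of \eqref{wevp} lie strictly below $\pi^2+1$, and the $i$ smallest of them, ordered by magnitude, are exactly $\kappa_1^{\ast},\dots,\kappa_i^{\ast}$.

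Second, I would bound $\kappa_i^{\ast}$ by $\Lambda_i$ by restricting the outer infimum to subspaces of the finite-dimensional trial space $V_n:=\fspan(v_1,\dots,v_n)$. Writing a generic $u=\sum_{j=1}^{n}x_j v_j$ with $x=(x_1,\dots,x_n)^T\in\R^n$ identifies $\|u\|_{H_0^1}^2=x^T A_0 x$ and $N(u,u)=x^T A_1 x$, so the restricted min-max coincides with the $i$-th smallest generalized eigenvalue of $A_0x=\Lambda A_1 x$ by the finite-dimensional Courant--Fischer theorem, that is, with $\Lambda_i$. Hence $\kappa_i^{\ast}\leq\Lambda_i$, and the hypothesis $\Lambda_n<\pi^2+1$ gives $\kappa_i^{\ast}\leq\Lambda_i\leq\Lambda_n<\pi^2+1$ for every $i\in\{1,\dots,n\}$. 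Combining this with the first step both guarantees the existence of at least $n$ eigenvalues of \eqref{wevp} below the essential spectrum and yields $\kappa_i=\kappa_i^{\ast}\leq\Lambda_i$, as required.

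The only subtle point, and the thing I would have to state precisely rather than wave at, is the strong form of the min-max principle invoked in the first step: namely that strict inequality between the $i$-th min-max value and the bottom of the essential spectrum is \emph{sufficient} to produce $i$ genuine discrete eigenvalues. This is classical for self-adjoint operators, and once it is applied to $R$ it transfers to the generalized eigenvalue problem \eqref{wevp} via the spectral correspondence described in the first paragraph; the remainder of the argument is routine linear algebra.
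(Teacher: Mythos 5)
The paper does not actually give a proof of Theorem~\ref{rayleighritz}; it cites it as a classical result and moves on. So there is no in-paper argument to compare with, and the task reduces to checking whether your proof is sound. It is, and it is the standard argument.

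Your two-step structure is exactly right: (1) the identification of the eigenvalues $\kappa$ of \eqref{wevp} with the eigenvalues of $R=(I_{H_0^1}-\Phi^{-1}L_\omega)^{-1}$ (equivalently, reciprocals of eigenvalues of the $N$-induced operator $T=I_{H_0^1}-\Phi^{-1}L_\omega$), together with the strong min-max principle: if $\kappa_i^{\ast}<\inf\sigma_{\mathrm{ess}}$, then $\kappa_1^{\ast},\dots,\kappa_i^{\ast}$ are genuine eigenvalues below the essential spectrum; and (2) the monotonicity $\kappa_i^{\ast}\leq\Lambda_i$ obtained by restricting the outer infimum to subspaces of the trial space $\fspan(v_1,\dots,v_n)$, where the finite-dimensional Courant--Fischer theorem identifies the restricted min-max values with the generalized matrix eigenvalues $\Lambda_i$. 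Combined with $\Lambda_n<\pi^2+1$ this gives $\kappa_i\leq\Lambda_i<\pi^2+1$ for $i=1,\dots,n$, which is the claim.

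One technical point you gesture at but might make more explicit if you were writing this out in full: the min-max principle in its textbook form is stated for the Rayleigh quotient $\langle Ru,u\rangle_{H_0^1}/\|u\|_{H_0^1}^2$ with test spaces inside $D(R)$, whereas the formula you use, $\|u\|_{H_0^1}^2/N(u,u)$ over all $i$-dimensional subspaces of $H_0^1(\Omega)$, is the generalized-eigenvalue-problem version. These agree, e.g.\ via the substitution $v=T^{1/2}u$ (using that $T$ is bounded, positive and injective, so $T^{1/2}$ is injective with dense range), or by invoking directly the min-max principle for forms. Since that equivalence is what allows you to test with the finite element functions $v_i$ -- which need not lie in $D(R)$ -- it is the load-bearing step and deserves an explicit sentence. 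With that addition the proof is complete and rigorous.
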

Note that good upper bounds will be obtained by Theorem \ref{rayleighritz} if $v_1,\ldots,v_n\in H_0^1(\Omega)$ are
chosen as approximate eigenfunctions associated with the $n$ smallest eigenvalues of \eqref{wevp}. The remaining task for applying Theorem \ref{rayleighritz} is the
enclosure of matrix eigenvalues, which can be achieved using interval arithmetic and
\cite[Lemma 4]{HoangPlWie} or using interval packages like INTLAB \cite{Ru99a}.

For the computation of lower eigenvalue bounds, which is more problematic than obtaining upper bounds, we use the
following method of Lehmann and Goerisch (see \cite{BehnkeGoe}). 
\begin{theorem}
\label{lehmanngoerisch}
Let $v_1,\ldots,v_n\in H_0^1(\Omega)$ and $A_0,A_1$ as before. Let $X$ be some vector space, $b$ some symmetric,
positive definite bilinear form on $X$, and $T:H_0^1(\Omega)\to X$ some linear operator satisfying
$b(T\psi,T\varphi)=\langle\psi,\varphi\rangle_{H_0^1}$ for all $\psi,\varphi\in H_0^1(\Omega)$.

Let $w_1,\ldots, w_n\in X$ satisfy
\beq
\label{bcond}
b(T\varphi,w_i)=N(\varphi,v_i)\quad\text{for all }\varphi\in H_0^1(\Omega),\ i=1,\ldots,n
\eeq
and define $A_2:=\left (b(w_i,w_j)\right)_{i,j=1\ldots,n}$. Moreover, let $\rho\in\R$ such that
\beq
\label{rhocond1}
\Lambda_n<\rho\leq\pi^2+1
\eeq
and in addition 
\beq
\label{rhocond2}
\rho\leq\kappa_{n+1},
\eeq
if an $(n+1)$-st eigenvalue $\kappa_{n+1}<\pi^2+1$ exists.\\
Then, with $\mu_1\leq\ldots\leq\mu_n<0$ denoting the eigenvalues of 
\beq
\label{lehgoeevp}
(A_0-\rho A_1)x=\mu(A_0-2\rho A_1+\rho^2 A_2)x,
\eeq we have
$$\kappa_m\geq\rho-\frac\rho{1-\mu_{n+1-m}},\quad m=1,\ldots, n.$$
\end{theorem}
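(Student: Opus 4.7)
The plan is to identify \eqref{lehgoeevp} with a Rayleigh-quotient eigenvalue problem on a finite-dimensional subspace via an operator-theoretic reformulation, and then apply Courant-Fischer. Let $S:H_0^1(\Omega)\to H_0^1(\Omega)$ be the bounded self-adjoint operator defined by $\langle Su,\varphi\rangle_{H_0^1}=N(u,\varphi)$; then \eqref{wevp} becomes $u=\kappa Su$, so $S=R^{-1}$ has eigenvalues $1/\kappa_m$ and the same eigenfunctions as $R$. In the ``ideal'' Goerisch choice $X=H_0^1(\Omega)$, $T=I$, $b=\langle\cdot,\cdot\rangle_{H_0^1}$ one has $w_i=Sv_i$, and a direct computation yields
\[
A_0-\rho A_1=(\langle(I-\rho S)v_i,v_j\rangle_{H_0^1})_{ij},\qquad A_0-2\rho A_1+\rho^2 A_2=(\langle(I-\rho S)v_i,(I-\rho S)v_j\rangle_{H_0^1})_{ij}.
\]
The substitution $w=(I-\rho S)v$ then converts \eqref{lehgoeevp} into the standard Rayleigh-quotient eigenvalue problem for the self-adjoint operator $C:=(I-\rho S)^{-1}$ restricted to the $n$-dimensional subspace $(I-\rho S)V$, where $V:=\fspan\{v_1,\ldots,v_n\}$.

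To justify the Goerisch extension, I would show that $A_2=(b(w_i,w_j))_{ij}$ dominates $(\langle Sv_i,Sv_j\rangle_{H_0^1})_{ij}$ in the positive-semidefinite sense: inserting $\varphi=Sv_j$ into \eqref{bcond} and using $b(T\cdot,T\cdot)=\langle\cdot,\cdot\rangle_{H_0^1}$ gives $b(T(Sv_j),w_i-T(Sv_i))=0$, hence
\[
b(w_i,w_j)=\langle Sv_i,Sv_j\rangle_{H_0^1}+b\bigl(w_i-T(Sv_i),w_j-T(Sv_j)\bigr),
\]
with the last term a Gram matrix in $b$ and therefore PSD. Enlarging the denominator of the Rayleigh quotient moves each negative matrix eigenvalue $\mu_m$ closer to zero; since $\mu\mapsto\rho-\rho/(1-\mu)$ is strictly decreasing on $(-\infty,0)$, this can only weaken, not invalidate, the resulting lower bound for $\kappa_m$.

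The decisive step is Courant-Fischer. Under $\Lambda_n<\rho\leq\kappa_{n+1}$ and $\rho\leq\pi^2+1$, the operator $I-\rho S$ has exactly $n$ negative eigenvalues $1-\rho/\kappa_m$ for $m=1,\ldots,n$, with the rest of its spectrum in $[0,\infty)$; the $n$ smallest points of $\sigma(C)$ are therefore $\kappa_{n+1-m}/(\kappa_{n+1-m}-\rho)$, $m=1,\ldots,n$. Applying Courant-Fischer with the $n$-dimensional trial space $(I-\rho S)V$ yields $\mu_m\geq\kappa_{n+1-m}/(\kappa_{n+1-m}-\rho)$; that $\mu_n<0$ (so that the $n$ matrix eigenvalues exhaust the negative spectrum reached by $V$) follows from the hypothesis $\Lambda_n<\rho$, which exactly forces $A_0-\rho A_1$ to be negative definite. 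Inverting the decreasing map $\mu\mapsto\rho-\rho/(1-\mu)$ and relabeling indices then produces the claim. The chief obstacle is making this minimax rigorous in the boundary cases $\rho=\kappa_{n+1}$ or $\rho=\pi^2+1$, where $C$ is unbounded; a standard remedy is to establish the inequality first for $\rho'<\rho$ and pass to the limit, using the continuous dependence of the matrix eigenvalues $\mu_m$ on the parameter.
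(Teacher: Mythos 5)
The paper does not give its own proof of Theorem \ref{lehmanngoerisch}; it defers to the cited reference \cite{BehnkeGoe}. Your argument is correct and reproduces the standard Lehmann--Goerisch derivation: reduce \eqref{lehgoeevp} (in the ideal case $X=H_0^1$, $T=I$, $w_i=Sv_i$) to a Rayleigh--Ritz problem for the shifted-inverse operator $C=(I-\rho S)^{-1}$ on the trial space $(I-\rho S)V$, identify the $n$ isolated negative points of $\sigma(C)$ as $\kappa_{n+1-m}/(\kappa_{n+1-m}-\rho)$ under $\Lambda_n<\rho\leq\kappa_{n+1}\leq\pi^2+1$, apply Courant--Fischer to get $\mu_m\geq\lambda_m(C)$, and invert the monotone map $\mu\mapsto\rho-\rho/(1-\mu)$; the Goerisch extension to general $(X,b,T,w_i)$ is handled exactly as you do, by the orthogonality $b(T\varphi,w_i-T(Sv_i))=0$ and the resulting positive-semidefinite increment $A_2-A_2^{\rm ideal}$, which can only move the $\mu_m$ towards zero and hence weaken (but not invalidate) the bound. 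Your closing remark about the boundary cases $\rho=\kappa_{n+1}$ or $\rho=\pi^2+1$ via a limit $\rho'\uparrow\rho$ is the right fix; one could also note that positive definiteness of $A_0-2\rho A_1+\rho^2 A_2$ (needed to make \eqref{lehgoeevp} a well-posed symmetric pencil with all eigenvalues negative, as asserted in Remark \ref{lehgoeremark}(ii)) already follows in your framework from $A_2\geq A_2^{\rm ideal}$ together with the Gram-matrix structure of $A_0-2\rho A_1+\rho^2A_2^{\rm ideal}=\bigl(\langle(I-\rho S)v_i,(I-\rho S)v_j\rangle_{H_0^1}\bigr)_{ij}$.
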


\begin{remark}
\label{lehgoeremark}
\begin{itemize}
\item[(i)] \eqref{rhocond1} and Theorem \ref{rayleighritz} imply in particular that at least $n$ eigenvalues
$\kappa_1\leq\ldots\leq\kappa_n<\pi^2+1$ exist.
\item[(ii)] Again by \eqref{rhocond1} and Theorem \ref{rayleighritz}, the matrix $A_0-\rho A_1$ is negative definite. Moreover, \eqref{bcond} and \eqref{rhocond1} show after some calculations that $A_0-2\rho A_1+\rho^2A_2$ is positive definite, hence the matrix eigenvalue problem 
\eqref{lehgoeevp} has indeed only negative eigenvalues.
\item[(iii)] We will see later that usually \eqref{bcond} does not determine $w_1,\ldots, w_n$ uniquely. A closer look
at the proof of Theorem \ref{lehmanngoerisch} makes clear that good bounds will be obtained if $w_i\approx \frac
1{\Lambda_i} Tv_i$, when $(\Lambda_i, v_i)$ is an approximate eigenpair to problem \eqref{wevp}.
\item[(iv)] Condition \eqref{rhocond2} requires an a-priori lower bound for the $(n+1)$-st eigenvalue (if it
exists) in order to compute lower bounds for the $n$ smallest eigenvalues. However, a rough lower bound $\rho$ will be
sufficient for this purpose and can be obtained using a homotopy method (see subsection \ref{sechomotopy}).
\end{itemize}
\end{remark}
We will now explain how to choose $X,b,T$ and $w_1,\ldots,w_n\in X$ satisfying the assumptions of Theorem \ref{lehmanngoerisch}: Let
$$X=\left (L^2(\Omega)\right)^2\times L^2(\Omega),\ b\left (\bpm w_1\\w_2\epm,\bpm\tilde w_1\\\tilde w_2\epm\right):=
\langle  w_1,\tilde w_1\rangle_{L^2}+\langle w_2,\tilde w_2\rangle_{L^2},\ Tu:=\bpm\nabla u\\u\epm.$$
Obviously, $b(T\psi,T\varphi)=\langle\psi,\varphi\rangle_{H_0^1}$ for all $\psi,\varphi\in H_0^1(\Omega)$. We consider
condition \eqref{bcond}:
\begin{align}
& b(T\varphi,w_i)=N(\varphi,v_i)\quad\text{for all }\varphi\in H_0^1(\Omega)\nonumber\\
\iff \ & \langle\nabla\varphi,w_{i,1}\rangle_{L^2}+\langle \varphi,w_{i,2}\rangle_{L^2}=\langle\varphi,
(1+3|\omega|\omega)v_i\rangle_{L^2}\quad\text{for all }\varphi\in H_0^1(\Omega)\nonumber\\
 \iff \ &w_{i,1}\in H(\dive,\Omega),\quad -\dive(w_{i,1})+w_{i,2}=(1+3|\omega|\omega)v_i\nonumber\\
\iff \ & w_{i,1}\in H(\dive,\Omega),\quad w_{i,2}=\dive(w_{i,1})+(1+3|\omega|\omega)v_i. 
\label{wi2cond}
\end{align}
This shows that $w_{i,1}\in H(\dive,\Omega)$ can be chosen arbitrarily whereas $w_{i,2}$ has to be chosen according to
\eqref{wi2cond}. Recalling Remark \ref{lehgoeremark} (iii), one should have $w_i\approx \frac 1{\Lambda_i}Tv_i$ with
an approximate eigenpair $(\Lambda_i,v_i)$ to obtain good bounds. Since $w_{i,2}$ is already fixed, it remains to require
$$w_{i,1}\approx\frac 1{\Lambda_i}\nabla v_i.$$
A suitable choice of $w_{i,1}$ is therefore given by an approximate minimizer in $H(\dive,\Omega)$ of 
$$\bigl \|\tfrac 1{\Lambda_i}\nabla v_i-w\bigr \|_{L^2}^2+
\bigl \|-\dive w+\bigl (\tfrac 1{\Lambda_i}-(1+3|\omega|\omega)\bigr )v_i\bigr \|_{L^2}^2.$$

\subsection{Homotopy method}
\label{sechomotopy}
Our aim is now to find some $\rho\in\R$ such that $\Lambda_n<\rho\leq\kappa_{n+1}<\pi^2+1$ if
$\kappa_{n+1}$ exists, or $\Lambda_n<\rho\leq\pi^2+1$ otherwise. The crucial idea is to find a base
problem, for which we have knowledge about the eigenvalues, and connect it with the original problem via a family of
eigenvalue problems such that, indexwise, the eigenvalues increase along the homotopy. 
In our case it is necessary to combine two separate homotopies: One is needed to find lower bounds for eigenvalues of
the eigenvalue problem $-\Delta u+ u=\kappa(1+\overline c)u$ in $\Omega$ ($u\in H_0^1(\Omega)$), where
$\overline c$ is a suitable piecewise constant function on $\Omega$. A second homotopy then connects this
eigenvalue problem to the original one. For the first homotopy we use a domain decomposition method, which goes
back to an idea of E.B. Davies and is explained in detail in \cite{BehMerPlWie}. 

To construct a suitable base problem, choose $0=\xi_0<\xi_1<\ldots<\xi_k$ \,and a function $\overline c\geq
3|\omega|\omega$ which is constant on each of the rectangles
$$
\begin{array}{crl}
(-1,0)\times(0,1),& (\xi_i,\xi_{i+1})\times(0,1)\quad (i=0,\ldots, k-1),& (\xi_k,\infty)\times(0,1)\\
& (-1,0)\times(-\xi_{i+1},-\xi_i)\quad (i=0,\ldots,k-1),& (-1,0)\times(-\infty,-\xi_k).
\end{array}
$$
Note that since $\omega$ has compact support, $\overline c=0$ on $(\xi_k,\infty)\times(0,1)$ and on
$(-1,0)\times(-\infty,-\xi_k)$ can be chosen if $\xi_k$ is large enough (which we will assume in the following). Define
now $\Omega_1:=(-1,0)\times(0,1),\ \Omega_2:=(0,\infty)\times(0,1),\ \Omega_3:=(-1,0)\times(-\infty,0)$ and consider
for $j=1,2,3$ the eigenvalue problems 
\beq
\label{domdecevp}
\left \{\begin{array}{rcll}
-\Delta u+ u & = & \kappa(1+\overline c)u& \text{in } \Omega_j\\
\pd u\nu & = & 0 & \text{on the interfaces }\Gamma_1:=\{0\}\times(0,1),\ \Gamma_2:=(-1,0)\times\{0\}\\
u & = & 0 & \text{on }\partial\Omega\cap\partial\Omega_j.
\end{array}\right .
\eeq
Note that a lower bound for the essential spectrum of the eigenvalue problems on $\Omega_2$ and $\Omega_3$ is given by
$\pi^2+1$ due to the compact support of $\overline c$. Using separation of variables on each
of the above rectangles we can compute fundamental systems for the resulting ODE problems. Problem \eqref{domdecevp} (for $j = 1,2,3$)  thus leads to transcendental equations in $\kappa$, whose solutions are the eigenvalues of
\eqref{domdecevp}. By interval bisection and an Interval Newton method we can compute enclosures of these roots and therefore enclosures for all eigenvalues below $\pi^2+1-\eps$, with some appropriately chosen $\eps>0$, of the three eigenvalue problems.
Let $\kappa_1^{(0)}\leq\kappa_2^{(0)}\leq\ldots\leq\kappa_L^{(0)}$ denote the union of all these eigenvalues (of all three problems) ordered by magnitude and counted by multiplicity. The following Lemma allows us to compare these eigenvalues with the eigenvalues below $\pi^2 + 1$ of the problem
\beq
\label{evpconstcoeff}
\left \{\begin{array}{rcll}
-\Delta u+u & = & \kappa(1+\overline c)u& \text{in }\Omega\\
u & = & 0 & \text{on }\partial\Omega,
\end{array}\right .
\eeq
which we denote by $\kappa_i^{(\infty)}$, ordered by magnitude and counted by multiplicity.
\begin{lemma}
\label{basebaseev}
For all $i=1,\ldots, L$ we have $\kappa_i^{(0)}\leq\kappa_i^{(\infty)}$, provided that an $i$-th eigenvalue 
$\kappa_i^{(\infty)}<\pi^2+1$ of \eqref{evpconstcoeff} exists.
\end{lemma}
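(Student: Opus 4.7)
The plan is to prove the inequality by the classical Dirichlet--Neumann bracketing argument: imposing the additional (Neumann) interface conditions that turn the problem on $\Omega$ into the three decoupled problems on $\Omega_1,\Omega_2,\Omega_3$ can only lower the eigenvalues. I will realize this via a min-max characterization on a suitably enlarged trial space.

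First I would introduce the broken Sobolev space
\[
\widetilde H:=\bigl\{u\in L^2(\Omega):\ u|_{\Omega_j}\in H^1(\Omega_j)\ \text{for }j=1,2,3,\ \ u=0\text{ on }\partial\Omega\cap\partial\Omega_j\bigr\},
\]
equipped with the inner product $\langle u,v\rangle_{\widetilde H}:=\sum_{j=1}^{3}\int_{\Omega_j}[\nabla u\cdot\nabla v+uv]\,dx$, and the bilinear form $\widetilde N(u,v):=\int_\Omega (1+\overline c)uv\,dx$. The weak form of the three decoupled problems \eqref{domdecevp} (Neumann on $\Gamma_1,\Gamma_2$) is exactly: find $\kappa\in\mathbb{R}$ and $u\in\widetilde H\setminus\{0\}$ with $\langle u,\varphi\rangle_{\widetilde H}=\kappa\widetilde N(u,\varphi)$ for all $\varphi\in\widetilde H$. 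Since $\widetilde H$ decomposes as the orthogonal direct sum $H^1_0(\Omega_1;\partial\Omega\cap\partial\Omega_1)\oplus H^1_0(\Omega_2;\partial\Omega\cap\partial\Omega_2)\oplus H^1_0(\Omega_3;\partial\Omega\cap\partial\Omega_3)$ and both forms respect this decomposition, the spectrum of the decoupled problem on $\widetilde H$ is precisely the union (with multiplicity) of the spectra on the three subdomains; hence $\kappa_1^{(0)}\leq\kappa_2^{(0)}\leq\cdots$ are its eigenvalues below $\pi^2+1$.

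Next I would invoke the essential-spectrum analysis already used earlier in the paper: on each of the semi-infinite strips $\Omega_2,\Omega_3$ the coefficient $\overline c$ has compact support, so the same Rayleigh-quotient argument used for $L_0$ shows that the essential spectrum of both the decoupled problem on $\widetilde H$ and the full problem \eqref{evpconstcoeff} is contained in $[\pi^2+1,\infty)$. Consequently, below $\pi^2+1$, the Courant--Fischer min-max principle applies and yields
\[
\kappa_i^{(0)}=\min_{\substack{V\subset\widetilde H\\\dim V=i}}\ \max_{u\in V\setminus\{0\}}\frac{\langle u,u\rangle_{\widetilde H}}{\widetilde N(u,u)},\qquad
\kappa_i^{(\infty)}=\min_{\substack{V\subset H_0^1(\Omega)\\\dim V=i}}\ \max_{u\in V\setminus\{0\}}\frac{\langle u,u\rangle_{H_0^1}}{\widetilde N(u,u)},
\]
whenever the respective $i$-th eigenvalue lies below $\pi^2+1$.

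Finally I would observe the key inclusion $H_0^1(\Omega)\subset\widetilde H$: any $u\in H_0^1(\Omega)$ restricts to an $H^1$-function on each $\Omega_j$ vanishing on $\partial\Omega\cap\partial\Omega_j$, and by additivity of the Dirichlet and $L^2$ integrals the two Rayleigh quotients coincide on $H_0^1(\Omega)$. Therefore every $i$-dimensional trial space admissible for $\kappa_i^{(\infty)}$ is also admissible for $\kappa_i^{(0)}$ with the same max, and the min over the larger family can only be smaller, giving $\kappa_i^{(0)}\leq\kappa_i^{(\infty)}$ for $i=1,\ldots,L$, which completes the proof.

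The main (mild) obstacle is purely bookkeeping around the essential spectrum on the unbounded components $\Omega_2,\Omega_3$: one has to be sure that the min-max formulas above really do characterize the discrete eigenvalues below $\pi^2+1$ and that no spurious part of the essential spectrum interferes with the counting. This is handled exactly as in the analysis of $L_0$ earlier in Section~5, using the compact support of $\overline c$ together with the Rayleigh-quotient bound $\pi^2$ on each semi-infinite strip.
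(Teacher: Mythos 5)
Your proposal is correct and takes essentially the same route as the paper: both define the broken Sobolev space $V$ (your $\widetilde H$), note the inclusion $H_0^1(\Omega)\subset V$, and apply Poincar\'e's (Courant--Fischer) min-max principle to deduce that enlarging the family of admissible trial subspaces can only decrease the $i$-th eigenvalue. You spell out more of the bookkeeping (the orthogonal decomposition of $\widetilde H$ and the essential-spectrum bound $\pi^2+1$ that justifies the min-max characterization below that threshold) than the paper's one-line inequality chain does, but the underlying argument is identical.
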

\begin{proof}
Let $V:=\left \{u\in L^2(\Omega):\ u|_{\Omega_j}\in H^1(\Omega_j),\ u|_{\partial\Omega\cap\partial\Omega_j}=0 \text{
for } j=1,2,3\right \}$. Since $V\supset H_0^1(\Omega)$ we have due to Poincaré's min-max principle:
\begin{align*}
\kappa_i^{(0)}&=\inf_{\substack{{U\subset V\text{ subspace}}\\{\dim U=i}}}\max_{u\in U\backslash\{0\}} 
\frac{\langle \nabla u,\nabla u\rangle_{L^2}+\langle u,u\rangle_{L^2}}{\langle
(1+\overline c)u,u\rangle_{L^2}}\\ &\leq
\inf_{\substack{{U\subset H_0^1(\Omega)\text{ subspace}}\\{\dim U=i}}}\max_{u\in U\backslash\{0\}} 
\frac{\langle \nabla u,\nabla u\rangle_{L^2}+\langle u,u\rangle_{L^2}}{\langle
(1+\overline c)u,u\rangle_{L^2}}=\kappa_i^{(\infty)}.
\end{align*}
\end{proof}
In principle, we can construct a homotopy connecting problems \eqref{domdecevp} and \eqref{evpconstcoeff}, but this is unnecessary, since a pure comparison of these two problems already leads to the desired rough lower bound for some higher 
eigenvalue of \eqref{evpconstcoeff}. More precisely, numerical Rayleigh-Ritz computations (Theorem \ref{rayleighritz}) for problem \eqref{evpconstcoeff}, with some suitably chosen $n \le L-1$, turn out to give bounds $\bar{\kappa}_1^{(\infty)} \le \dots \le \bar{\kappa}_{n+1}^{(\infty)} < \pi^2 + 1$, whence by Theorem \ref{rayleighritz} at least $n+1$ eigenvalues $\kappa_1^{(\infty)} \le \dots \le \kappa_{n+1}^{(\infty)}$ of problem \eqref{evpconstcoeff} below $\pi^2 + 1$ exist, and $\kappa_i^{(\infty)} \le \bar{\kappa}_i^{(\infty)}$. Moreover, the computations give $\bar{\kappa}_n^{(\infty)} < \kappa_{n+1}^{(0)}$, whence we can find some $\rho\in\R$ such that
\beq
\label{evpconstcoeff_a}
\bar{\kappa}_n^{(\infty)} < \rho\leq\kappa_{n+1}^{(0)}\stackrel{\text{Lemma }\ref{basebaseev}}\leq 
\kappa_{n+1}^{(\infty)}  \,< \, \pi^2 + 1.
\eeq
As a second step we have to connect problem \eqref{evpconstcoeff}, with piecewise constant coefficient function on the right-hand-side, with our original eigenvalue problem \eqref{wevp}. For this purpose we define $c_s:=(1-s)\overline
c+s\left (3|\omega|\omega\right )$ for $0\leq s\leq 1$ and consider the $s$-dependent eigenvalue problem
\beq
\label{sevp}
\int_\Omega \left[\nabla u\cdot\nabla\varphi+u\varphi \right ]\,dx=\tilde \kappa^{(s)}\int_\Omega(1+c_s)u\varphi\,dx
\quad\text{for all }\varphi\in H_0^1(\Omega).
\eeq
Obviously, for $s=0$ this eigenvalue problem equals \eqref{evpconstcoeff} (i.e. $\tilde{\kappa}_i^{(0)} = \kappa_i^{(\infty)}$) and for $s=1$ we have problem \eqref{wevp}. 
Moreover, by Poincaré's min-max principle, the eigenvalues of \eqref{sevp} increase along the homotopy since $\overline c\geq 3|\omega|\omega$, i.e.
for $0\leq s\leq t\leq 1$ we have $\tilde \kappa_i^{(s)}\leq\tilde\kappa_i^{(t)}$ as long as
$\tilde\kappa_i^{(t)}<\pi^2+1$. Step by step, this provides numbers $\rho$ (see \eqref{rhocond1}, \eqref{rhocond2}) for the application of Theorem \ref{lehmanngoerisch} to problem \eqref{sevp}, for an increasing (finite) sequence of $s$-values. A detailed description of this homotopy method can be found e.g. in
\cite{Breuerbeam}.

\section{Numerical Results}
\subsection{Practical computation of the residuum}
\label{defectcompnum}
We first want to comment on some technical difficulties arising in the defect computation. Recall that we can estimate
the residuum by:
\begin{align}
\label{resdef}
\|-\Delta\omega-|\omega|^3\|_{H^{-1}} \leq \|\nabla\omega-\tilde \rho\|_{L^2}+C_2\|-\dive\tilde
\rho-|\omega|^3\|_{L^2},
\end{align}
where $\tilde\rho\in H(\dive,\Omega)$ approximately minimizes the right-hand side of \eqref{resdef} and therefore is an approximation of $\nabla\omega$. The approximate solution $\omega$ is of the form $\omega=\tilde aw+v$, 
with a corner singular function $w=\lambda\gamma\in H_0^1(\Omega)$ (where $\gamma$ is given by \eqref{gammadef} and $\lambda(x,y)=(1-x^2)^2(1-y^2)^2\chi_{(-1,1)^2}$) and a finite element approximation $v\in
V_N$ of the regular part (compare \eqref{newapp}; now we write $v$ instead of $\tilde{v}$). Here $V_N$ denotes an $H_0^1$-conforming finite element space of dimension $N$. Let now $\tilde\rho_v\in
\left (V_N \right )^2$ be an approximation of $\nabla v$ (as well as an approximate solution of
$\dive\rho=-|\omega|^3-\tilde a\Delta w$; compare \eqref{regparteq}). $\tilde\rho_v$ is computed by approximate minimization of $\|\rho-\nabla v\|_{L^2}^2+C_2^2\|\dive\rho+\tilde a\Delta w+|\omega|^3\|_{L^2}^2$ in $(V_N)^2$. Define 
$$\tilde\rho:=\tilde a\nabla w+\tilde\rho_v.$$
Substituting the expressions for $\omega$ and $\tilde\rho$ in \eqref{resdef} yields
\beq
\|-\Delta\omega-|\omega|^3\|_{H^{-1}} \leq \|\nabla v-\tilde\rho_v\|_{L^2}+C_2\|-\tilde a\Delta
w-\dive\tilde\rho_v-|\tilde
aw+v|^3\|_{L^2}.
\label{resreg}
\eeq
The first summand on the right-hand side of \eqref{resreg} can be computed exactly using a quadrature rule of
sufficiently
high degree together with interval arithmetic, as $\tilde \rho$ is an element of $(V_N)^2$ and also $\nabla v$ is piecewise polynomial in each component. Due
to the mixture of cartesian and polar coordinates in the second summand, a verified computation of a tight upper bound for this term is technically non-trivial. We interpolate the singular function $\gamma(r,\varphi)$, as well as $x\pdt{\gamma}{x}(r,\varphi)=-\tfrac 23r^{2/3}\cos\varphi\sin\tfrac \varphi 3$ and $y\pdt\gamma y(r,\varphi)=\tfrac 23r^{2/3}\sin\varphi\cos\tfrac\varphi 3$ in the finite element space $V_N$, and replace the corresponding terms in the second summand in \eqref{resreg} by these interpolations. Now the integrand is piecewise polynomial, and hence the integral can be computed exactly. The remaining task is to bound the interpolation errors, which after some elementary estimations amounts to the computation of bounds to
\beq
\label{gammaint}
\int_K(\gamma-I(\gamma))^2\,d(x,y)
\eeq
for each element $K$ (with $I(\gamma)$ denoting the interpolation), and to analogous terms for $x\pdt \gamma x$ and $y\pdt\gamma y$. For this purpose we cover each element $K$ by a finite union of circular segments (i.e. rectangles in polar coordinates), replace $K$ in \eqref{gammaint} by this union (giving an upper bound to the integral), and transform the integral to polar coordinate integration (over a union of rectangles), which can be carried out in closed form, using Maple for calculating primitive functions.

\subsection{Computational Results}

We will now report on some of the numerical results that finally prove the existence of a solution of problem
\eqref{bvp} together with an error bound. All computations have been carried out on the parallel cluster OTTO of the Institute for Applied and Numerical Mathematics at Karlsruhe Institute of Technology. We used the finite element software M$++$ \cite{Ref1}, which is written in C$++$. For interval arithmetic we used the libraries C-XSC \cite{Ref2} as well as MPFR and MPFI \cite{Ref3}. Our source code is available on {\it http://www.math.kit.edu/user/mi1/Plum/PaperPPR/} or upon request to the third author.

As already mentioned in section \ref{approxsol}, we used the computational domain $\Omega_0=\Omega\cap(-3,3)^2$, and taking symmetry into account we restricted ourselves to the half domain
$\tilde\Omega_0=\conv\{(0,0),(3,0),$\linebreak
$(3,1),(-1,1)\}$, imposing Neumann boundary conditions on
$\partial\tilde\Omega_0\backslash\partial\Omega_0=:\Gamma_N$. This diminishes the constant $K$ in \eqref{Kcond}, since only eigenfunctions which are symmetric w.r.t. $\Gamma_N$ have to be considered in the eigenvalue problem \eqref{wevp}, and hence less eigenvalues contribute to the minimum in \eqref{Kcondmin}. Furthermore, symmetry reduces the computational effort. Using the space $\{ u \in H^1 (\tilde{\Omega}_0): u = 0$ on $\partial \Omega_0 \cap \partial \tilde{\Omega}_0\}$ in all computations has lead to an approximate solution that is symmetric w.r.t. $\Gamma_N$ and eventually - after a successful application of our theoretical results - to a symmetric solution $u$ of \eqref{bvp}.

In our computations we used Serendipity finite elements, whose nodes are given by corners and midpoints of the elements.
Figure \ref{fig:appsol} shows the computed approximate solution $\omega_0\in H_0^1(\Omega_0)$ (see \eqref{omdef}) on the full domain $\Omega_0$.

\begin{figure}[h]
 \includegraphics[width=0.5\textwidth]{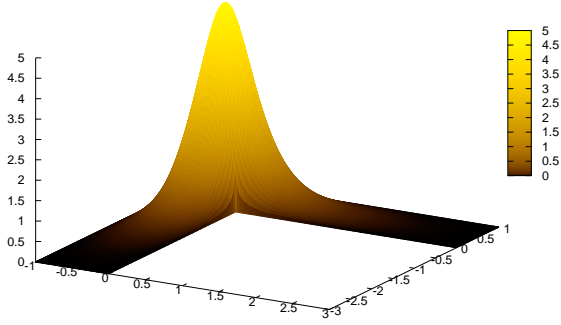}
\caption{Approximate solution $\omega_0\in H_0^1(\Omega_0)$}
\label{fig:appsol}
\end{figure}

To compute bounds for eigenvalues of \eqref{wevp} we used, as described before, a domain decomposition method to obtain lower bounds for the base
eigenvalues and a homotopy method to connect the base problem \eqref{evpconstcoeff} with our original eigenvalue problem \eqref{wevp}. We note that for the eigenvalue computation we did not use the approximation $\omega=\tilde a w+v$, but an interpolation
$\omega_{FE}:=I_{V_{\tilde N}}(\omega)$ in a finite element space $V_{\tilde N}$ which is coarser than
$V_N$. This avoids complicated integration during the homotopies and saves computation time. Eventually we obtain a
bound for the inverse of the linearization at $\omega_{FE}$, which can then be used to compute the corresponding bound
for $L_\omega$ by Lemma \ref{l1} (b), with $\tilde{u} = \omega_{FE}$ and $u = \omega$.

Recall that the
base problem is given by
\beq
\label{34}
\left \{\begin{array}{rcll}
-\Delta u+u & = & \kappa(1+\overline c)u& \text{in }\Omega\\
u & = & 0 & \text{on }\partial\Omega,
\end{array}\right .
\eeq
where now $\overline c\in L^\infty(\Omega)$ is chosen such that $\overline c\geq 3|\omega_{FE}|\omega_{FE}$ in $\Omega$
and is constant on the rectangles $(-1,0)\times(0,1)$, $(0,1)\times(0,1)$, $(1,3)\times (0,1)$ and
$(3,\infty)\times(0,1)$ as well as on $(-1,0)\times (-1,0),\ (-1,0)\times(-3,-1)$ and $(-1,0)\times(-\infty,-3)$. This
choice defines also the aforementioned comparison problem for the domain decomposition.

In the following table and figure we display some results of our eigenvalue computations. The first column of the table shows lower bounds $\underline{\kappa}_i^{(0)}$ for the eigenvalues $\kappa_i^{(0)}$ of the comparison problem, which by Lemma \ref{basebaseev}  constitute lower bounds (indexwise) for the eigenvalues $\kappa_i^{(\infty)} = \tilde{\kappa}_i^{(0)}$ of the
base eigenvalue problem \eqref{34}. The second column of the table shows upper bounds $\bar{\kappa}_i^{(\infty)}$ for the eigenvalues $\kappa_i^{(\infty)}$, computed by Theorem \ref{rayleighritz}. In particular $\bar{\kappa}_{11}^{(\infty)} < \underline{\kappa}_{12}^{(0)} \; (\le \kappa_{12}^{(\infty)})$, whence \eqref{evpconstcoeff_a} holds for $n=11$ and $\rho = \underline{\kappa}_{12}^{(0)}$, which enables the start of the homotopy for problem \eqref{sevp}. The figure shows the course of the homotopy, where we started with the a-priori lower bound $\underline{\kappa}_{12}^{(0)}$ for the $12$th eigenvalue $\tilde{\kappa}_{12}^{(0)} = \kappa_{12}^{(\infty)}$ of problem \eqref{sevp} with $s=0$ (and $\omega$ replaced by $\omega_{FE}$). For additional illustration, the figure also contains approximations to the eigenvalues $\tilde{\kappa}_1^{(0)}, \dots, \tilde{\kappa}_{12}^{(0)}$. At the end of the homotopy we obtained a lower bound for the third eigenvalue of problem \eqref{wevp} (with $\omega$ replaced by $\omega_{FE}$). 
Using the Lehmann-Goerisch Theorem once again we computed the desired lower bound for the second eigenvalue of \eqref{wevp} (with
$\omega_{FE}$ instead of $\omega$), which is the smallest eigenvalue above 1. Finally an application of the Rayleigh-Ritz method yields an upper bound for the first eigenvalue of \eqref{wevp} (with $\omega$ replaced by $\omega_{FE}$) and Lemma \ref{l1} (b) then gives a bound for the inverse of the linearization at $\omega$.

\begin{minipage}{0.3\textwidth}
\begin{tabular}{|c|c|c|}
\hline
$i$ & $\underline \kappa_i^{(0)}$ & $\overline \kappa_i^{(\infty)}$ \\
\hline\hline
1 & 0.08291 & 0.18117 \\
\hline
2 & 0.35867 & 0.48073 \\
\hline
3 & 0.52231 & 0.69818 \\
\hline
4 & 0.63443 & 0.84524 \\
\hline
5 & 0.91020 & 1.02708 \\
\hline
6 & 1.08005 & 1.43595 \\
\hline
7 & 1.18596 & 1.52509 \\
\hline
8 & 1.73749 & 1.93536 \\
\hline
9 & 1.73749 & 1.97598 \\
\hline
10 & 1.79188 & 2.07092 \\
\hline
11 & 2.01325 & 2.27739 \\
\hline
12 & 2.35653 & 2.75040  \\
\hline
\end{tabular} 
\vspace{0.3cm}

Table 1: Eigenvalues of\\ the comparison problem\\ and the base problem
\end{minipage}
\begin{minipage}{0.68\textwidth}
\psfrag{k1}{\!\scriptsize$\tilde\kappa_1^{(0)}$}
\psfrag{k2}{\!\scriptsize$\tilde\kappa_2^{(0)}$}
\psfrag{k3}{\!\scriptsize$\tilde\kappa_3^{(0)}$}
\psfrag{k4}{\!\scriptsize$\tilde\kappa_4^{(0)}$}
\psfrag{k5}{\!\scriptsize$\tilde\kappa_5^{(0)}$}
\psfrag{k6}{\!\scriptsize$\tilde\kappa_6^{(0)}$}
\psfrag{k7}{\!\scriptsize$\tilde\kappa_7^{(0)}$}
\psfrag{k8}{\!\scriptsize$\tilde\kappa_8^{(0)}$}
\psfrag{k9}{\!\scriptsize$\tilde\kappa_9^{(0)}$}
\psfrag{k10}{\!\scriptsize$\tilde\kappa_{10}^{(0)}$}
\psfrag{k11}{\!\scriptsize$\tilde\kappa_{11}^{(0)}$}
\psfrag{k12}{\!\scriptsize$\tilde\kappa_{12}^{(0)}$} 
\psfrag{k12low}{\!\!\!$\mathbf{\underline{\kappa}_{12}^{(0)}}$}
\psfrag{0}{}
\psfrag{1}{\scriptsize$1$}
\psfrag{s0}{\!\!\tiny$s=0$}
\psfrag{s1}{\!\!\!\tiny$s_1=0.0313$}
\psfrag{s2}{\!\!\!\tiny$s_2=0.0918$}
\psfrag{s3}{\!\!\!\tiny$s_3=0.3472$}
\psfrag{s4}{\!\!\!\tiny$s_4=0.643$}
\psfrag{s5}{\!\!\!\tiny$s_5=0.6876$}
\psfrag{s6}{\!\!\!\tiny$s_6=0.8438$}
\psfrag{s7}{\!\!\!\tiny$s=1$}
\psfrag{2.5}{}
\psfrag{k3l}{\scriptsize$\underline\kappa_3$}
\psfrag{k2l}{\scriptsize$\mathbf{\begin{array}{r} \mathbf{\underline\kappa_2=}\\ \mathbf{1.368} \end{array}}$}
\psfrag{k1u}{\scriptsize$\mathbf{\begin{array}{r} \mathbf{\overline\kappa_1=}\\ \mathbf{0.353 } \end{array}}$}
\includegraphics[width=\textwidth]{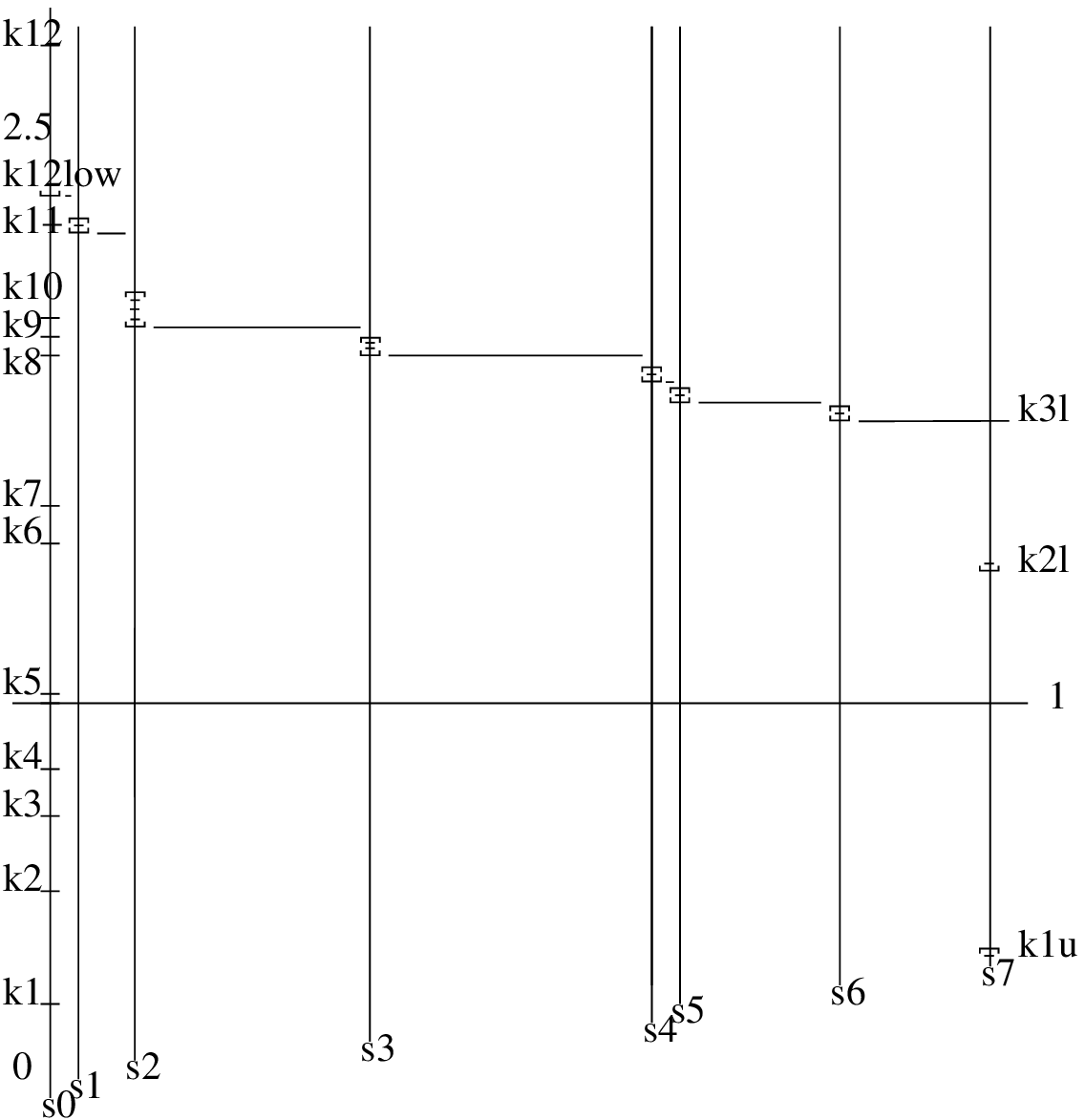}
\begin{center}
Figure 2: Course of the homotopy
\end{center}
\end{minipage}

An embedding constant $C_4$ for the embedding $H_0^1(\Omega)\hookrightarrow L^4(\Omega)$, which is needed for the application of Theorem \ref{maintheo}, can be computed using Lemma 2 in \cite{PlumDMV}. The computation requires a lower bound for the smallest Dirichlet eigenvalue of $-\Delta$ on $\Omega$, which can again be obtained using the Lehmann-Goerisch method.

Verified bounds for some of the relevant data are given by:
\begin{align*}
\|\omega\|_{L^4(\Omega)} &\leq 3.014333 \\ 
\|-\Delta\omega-|\omega|^3\|_{H^{-1}(\Omega)} & \leq 0.001699\\ 
K^{\text{sym}} & = 3.722884\\ 
C_4 & = 0.462000\\ 
\alpha & = 0.006471, \\ 
\end{align*}
where $K^{\text{sym}}$ denotes a constant satisfying
\beq
\label{Ksym}
\|v\|_{H_0^1(\Omega)}\leq K^{\text{sym}}\|L_\omega[v]\|_{H^{-1}(\Omega)}\qquad\text{for all }v\in H_0^1(\Omega)
\text{ which are symmetric w.r.t. }\Gamma_N.
\eeq
Thus, the existence of a symmetric solution $u\in H_0^1(\Omega)$ to problem \eqref{bvp} with
$\|u-\omega\|_{H_0^1}\leq 0.006471$ is proved.

\section{Nondegeneracy of the solution}

Besides decay properties, which we will analyze in the next section, we also prove nondegeneracy of the
solution $u\in H_0^1(\Omega)$ we have obtained, i.e. that $0$ is not in the spectrum of the linearization $L_u$ at $u$. For this purpose we note that due to the restriction to the half domain $\tilde\Omega_0$ with Neumann boundary conditions on $\Gamma_N$,
the eigenvalue bounds computed in the previous section are not sufficient to prove nondegeneracy of $u$ in
the whole space $H_0^1(\Omega)$. Indeed, we have to compute also eigenvalues of $\Phi^{-1}L_\omega:H_0^1(\Omega)\to
H_0^1(\Omega)$ corresponding to antisymmetric eigenfunctions. This can be done using the same methods as described in the
previous sections, this time imposing Dirichlet boundary conditions on $\Gamma_N$. Altogether, bounds for the 
smallest eigenvalues of $\Phi^{-1}L_\omega$ are given by
\beq
\label{ev12bounds}
\nu_1\leq -1.8359,\qquad \nu_2\geq 0.1116,
\eeq
where the first eigenvalue corresponds to a symmetric eigenfunction, while the second one corresponds to an
antisymmetric one. Thus, $0$ is not in the spectrum of $\Phi^{-1} L_{\omega}$. To obtain the corresponding result also for $\Phi^{-1} L_u$, we first apply Lemma \ref{l1} (b), with $p_1=p_2=p_3=p_4=4$, to obtain
\begin{corollary}
\label{cor1}
Let $u,\tilde u\in H_0^1(\Omega)$ and let $\dist\left (\sigma(\Phi^{-1}L_{\tilde u}),0\right)>0$. Furthermore assume
that, for some $\alpha>0$,
$$\|u-\tilde u\|_{H_0^1}\leq\alpha.$$
If
\beq
\label{kappanondeg}
\kappa:=3C_4^3 \left (C_4\alpha+2\|\tilde u\|_{L^4}\right)\alpha<\dist(\sigma(\Phi^{-1}L_{\tilde u}),0)
\eeq
then nondegeneracy of $u$ follows.
\end{corollary}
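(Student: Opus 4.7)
The plan is to reduce Corollary \ref{cor1} to a direct application of Lemma \ref{l1}(ii) with the choice $p_1=p_2=p_3=p_4=4$, using as starting point the spectral gap assumption on $\Phi^{-1}L_{\tilde u}$.

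First I would note that since $\Phi^{-1}L_{\tilde u}:H_0^1(\Omega)\to H_0^1(\Omega)$ is bounded and self-adjoint (by exactly the symmetry argument recorded in \eqref{PhiLsymm}, which is independent of which function plays the role of $\omega$), the hypothesis $\dist(\sigma(\Phi^{-1}L_{\tilde u}),0)>0$ together with the spectral theorem yields
$$\|v\|_{H_0^1}\leq K\,\|(\Phi^{-1}L_{\tilde u})[v]\|_{H_0^1}=K\,\|L_{\tilde u}[v]\|_{H^{-1}}\quad\text{for all }v\in H_0^1(\Omega),$$
where $K:=1/\dist(\sigma(\Phi^{-1}L_{\tilde u}),0)$ and the last equality uses that $\Phi$ is an isometry. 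This puts us in the setting of Lemma \ref{l1}(ii).

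Next I would estimate the quantity $\kappa$ appearing in Lemma \ref{l1}(ii) in terms of the data of the corollary. By the $H_0^1\hookrightarrow L^4$ embedding we have $\|u-\tilde u\|_{L^4}\leq C_4\|u-\tilde u\|_{H_0^1}\leq C_4\alpha$, and therefore $\|u\|_{L^4}\leq \|\tilde u\|_{L^4}+C_4\alpha$. Inserting into \eqref{kappacond} yields
$$\kappa_{L1}=3C_4^{2}K\bigl(\|u\|_{L^4}+\|\tilde u\|_{L^4}\bigr)\|u-\tilde u\|_{L^4}
\leq 3C_4^{3}K\bigl(2\|\tilde u\|_{L^4}+C_4\alpha\bigr)\alpha=K\kappa.$$
The hypothesis \eqref{kappanondeg} then gives $K\kappa<1$, so $\kappa_{L1}<1$ and Lemma \ref{l1}(ii) is applicable.

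The conclusion of Lemma \ref{l1}(ii) reads
$$\|v\|_{H_0^1}\leq\frac{K}{1-\kappa_{L1}}\|L_u[v]\|_{H^{-1}}\quad\text{for all }v\in H_0^1(\Omega),$$
which shows that $L_u$ is bounded below. Finally I would repeat, verbatim with $u$ in place of $\omega$, the density/closedness argument given after \eqref{Phidef}--\eqref{PhiLsymm}: boundedness from below gives injectivity and closed range of $L_u$, and the self-adjointness of $\Phi^{-1}L_u$ combined with injectivity gives that $L_u(H_0^1(\Omega))$ has trivial orthogonal complement in $H^{-1}(\Omega)$. Hence $L_u$ is a bijection, equivalently $\Phi^{-1}L_u$ has $0\notin\sigma(\Phi^{-1}L_u)$, which is precisely nondegeneracy of $u$.

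There is no real obstacle in this proof; the only point requiring a little care is verifying that the abstract framework of Section 2 (self-adjointness of $\Phi^{-1}L_{\cdot}$ and the density/closed-range argument promoting injectivity to bijectivity) transfers from $\omega$ to the exact solution $u$. Since both arguments depend only on the structural form of $L_\cdot$ and on a lower bound of the type \eqref{Kcond}, which is exactly what Lemma \ref{l1}(ii) supplies, the transfer is immediate.
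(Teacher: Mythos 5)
Your proof is correct and follows the paper's intended route: the spectral gap yields the bound $\|v\|_{H_0^1}\leq K\|L_{\tilde u}[v]\|_{H^{-1}}$ with $K=1/\dist(\sigma(\Phi^{-1}L_{\tilde u}),0)$, and then Lemma~\ref{l1}(ii) with $p_1=\dots=p_4=4$ (together with the $L^4$-embedding bounds on $\|u\|_{L^4}$ and $\|u-\tilde u\|_{L^4}$) gives the estimate $\kappa_{L1}\leq K\kappa<1$ and hence a lower bound for $L_u$. The only minor remark is that once you have the lower bound on the self-adjoint operator $\Phi^{-1}L_u$, $0\notin\sigma(\Phi^{-1}L_u)$ already follows because for self-adjoint operators the spectrum coincides with the approximate point spectrum, so the explicit density/closed-range detour, while correct and in the spirit of Section~2, is not strictly needed.
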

For the proof note that $\dist(\sigma(\Phi^{-1}L_{\tilde u}),0)>0$ implies 
$\|v\|_{H_0^1}\leq K\|L_{\tilde u}[v]\|_{H^{-1}}$ ($v\in H_0^1(\Omega)$) with $K=\tfrac 1{\dist(\sigma(\Phi^{-1}L_{\tilde u}),0)}$ (see also \eqref{KcondH10}, \eqref{Kcondmin}).
 
We apply Corollary \ref{cor1} with $\tilde u=\omega$, and $u$ being the solution of \eqref{bvp} given
by Theorem \ref{maintheo}, and thus satisfying 
\beq
\label{encl2}
\|u-\omega\|_{H_0^1}\leq\alpha.
\eeq
Using the data of the previous paragraph we obtain $\kappa=0.01473$ and therefore, by \eqref{ev12bounds} and \eqref{kappanondeg}, nondegeneracy of $u$. Moreover, using Poincaré's min-max principle, we can deduce from \eqref{ev12bounds} and \eqref{encl2} that the Morse index of $u$ is 1; we omit the details here.

\section{Decay at infinity}

We finally prove that the solution $u\in H_0^1(\Omega)$ decays exponentially at infinity.

\begin{theorem}
Let $\phi(z)=\sin(\pi z)$. There exists a constant $\beta>0$ such that for $x\to\infty$ we have
\begin{align*}
u(x,y)&=\beta e^{-\pi x}\phi(y)+o(e^{-\pi x}\phi(y))\\
\pdt ux(x,y)&=-\beta \pi e^{-\pi x}\phi(y)+o(e^{-\pi x}\phi(y))
\end{align*}
and analogously for $y\to-\infty$
\begin{align*}
u(x,y)&=\beta e^{\pi y}\phi(x+1)+o(e^{\pi y}\phi(x+1))\\
\pdt uy(x,y)&=\beta \pi e^{\pi y}\phi(x+1)+o(e^{\pi y}\phi(x+1)).
\end{align*}
\end{theorem}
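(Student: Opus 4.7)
The plan is to work on the horizontal half-strip $S:=(T,\infty)\times(0,1)$ for some $T\geq 3$, so that $\omega\equiv 0$ on $S$ and $u$ satisfies $-\Delta u=|u|^3$ on $S$ with zero Dirichlet conditions on $y=0,1$. The $y\to-\infty$ expansion will then be inferred from the reflection $\sigma:(x,y)\mapsto(-y,-x)$, which is an involution sending $\Omega$ to itself: since $\omega\circ\sigma=\omega$ and Banach's fixed-point argument underlying Theorem \ref{maintheo} gives uniqueness of the enclosed solution inside the $\alpha$-ball around $\omega$, both $u$ and $\sigma u$ lie in this ball and solve \eqref{bvp}, so $u\circ\sigma=u$. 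Using $-\sin(\pi x)=\sin(\pi(x+1))$ for $x\in(-1,0)$, the $x\to\infty$ expansion for $u$ transforms into the stated $y\to-\infty$ expansion with the same coefficient $\beta$.

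Because $-\Delta u=|u|^3\geq 0$, $u$ is superharmonic on $\Omega$. A standard elliptic bootstrap starting from $u\in H^1\cap L^4(\Omega)$ (using $|u|^3\in L^{4/3}$ and $W^{2,4/3}\hookrightarrow C^0$ in two dimensions) gives $u\in C^\infty(\overline S)$ and, via the integrability of $u^4$ on translates of a fixed ball, $\|u(x,\cdot)\|_{L^\infty((0,1))}\to 0$ as $x\to\infty$. Together with $u|_{\partial\Omega}=0$ this yields $u\geq 0$ by the minimum principle, and then $u>0$ in $\Omega$ by the strong minimum principle. Next, for any $\mu\in(0,\pi)$ I establish the preliminary bound $u(x,y)\leq C_\mu e^{-\mu x}\sin(\pi y)$ on $S$ by a barrier argument: set $\epsilon:=\sqrt{\pi^2-\mu^2}$ and pick $T_\mu$ so that $\|u\|_{L^\infty(\{x\geq T_\mu\})}\leq\epsilon$, so that $u^3\leq\epsilon^2 u$ and $(-\Delta-\epsilon^2)u\leq 0$, while $w_\mu:=C_\mu e^{-\mu x}\sin(\pi y)$ solves $(-\Delta-\epsilon^2)w_\mu=0$. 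Applying the maximum principle for $-\Delta-\epsilon^2$ on $(T_\mu,R)\times(0,1)$ (valid since $\epsilon^2<\pi^2$ is strictly below the principal Dirichlet eigenvalue) to $u-w_\mu$, choosing $C_\mu$ large enough on $\{x=T_\mu\}$, and letting $R\to\infty$ yields the bound.

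I now expand $u(x,y)=\sum_{k\geq 1} a_k(x)\sin(k\pi y)$ with $a_k(x):=2\int_0^1 u(x,y)\sin(k\pi y)\,dy$; projecting the PDE onto $\sin(k\pi y)$ gives
\[
-a_k''(x)+k^2\pi^2 a_k(x)=f_k(x),\qquad f_k(x):=2\int_0^1 u(x,y)^3\sin(k\pi y)\,dy,
\]
with $a_k(x)\to 0$. Fix $\mu\in(\pi/3,\pi)$; the preliminary bound gives $f_k(x)=O(e^{-3\mu x})$ with $3\mu>\pi$. Substituting $a_1(x)=e^{-\pi x}b(x)$ turns the ODE for $a_1$ into $b''-2\pi b'=-e^{\pi x}f_1$; solving the first-order equation for $c:=b'$ under the vanishing condition at infinity yields
\[
b'(x)=e^{2\pi x}\int_x^\infty e^{-\pi s}f_1(s)\,ds\geq 0,
\]
since $f_1>0$ by positivity of $u$. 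Thus $b$ is non-decreasing; the decay $f_1=O(e^{-3\mu x})$ forces $b'(x)=O(e^{-(3\mu-\pi)x})$, which is integrable, so $\beta:=\lim_{x\to\infty}b(x)$ exists, and $\beta\geq b(T)>0$ by strict positivity and monotonicity of $b$. Writing $b(x)-\beta=-\int_x^\infty b'$ gives $|b(x)-\beta|=O(e^{-(3\mu-\pi)x})$, whence $a_1(x)=\beta e^{-\pi x}+o(e^{-\pi x})$ and $a_1'(x)=e^{-\pi x}(b'-\pi b)=-\pi\beta e^{-\pi x}+o(e^{-\pi x})$.

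For $k\geq 2$ the bounded solutions of the ODE for $a_k$ decay at rate $\min(k\pi,3\mu)>\pi$, so $a_k(x)=o(e^{-\pi x})$ and likewise $a_k'(x)=o(e^{-\pi x})$. The main obstacle is upgrading these mode-wise statements to a uniform-in-$y$ bound on the residual $v(x,y):=u(x,y)-a_1(x)\sin(\pi y)$. I handle this by noting that $v$ is $L^2((0,1))$-orthogonal to $\sin(\pi y)$ for each $x$ and solves $-\Delta v=u^3-f_1(x)\sin(\pi y)$; hence a Poincar\'e estimate with the improved constant $(2\pi)^2$ combined with a weighted-strip energy argument gives $\|v(x,\cdot)\|_{L^2((0,1))}=O(e^{-2\pi x})$, and interior $H^2$-regularity on unit-width horizontal slabs upgrades this to the pointwise uniform bound $v(x,y)=o(e^{-\pi x})$, with an analogous estimate for $v_x$. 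Combined with the leading-mode asymptotics this completes the proof, and the $y\to-\infty$ case then follows from the symmetry $u\circ\sigma=u$ recorded at the outset.
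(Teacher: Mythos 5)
Your proposal takes a genuinely different route from the paper. The paper's proof is very short: it shows that $u$ is continuous on $\overline\Omega\setminus[-2.5,2.5]^2$ and that $u(x,y)\to 0$ uniformly as $x\to\infty$ (resp.\ $y\to-\infty$), by translating a cut-off version of the equation along the strip and applying a local $H^2$-estimate plus the embedding $H^2\hookrightarrow C$ in two dimensions, and then it simply \emph{cites} Proposition~4.2 of the Berestycki--Nirenberg reference on the two half-strips to obtain the stated exponential asymptotics. You instead re-derive those asymptotics from scratch: positivity of $u$ via the minimum principle, a barrier estimate $u\le C_\mu e^{-\mu x}\sin(\pi y)$ for every $\mu<\pi$ (valid since $-\Delta-(\pi^2-\mu^2)$ obeys a maximum principle on finite substrips), a Fourier decomposition in $\sin(k\pi y)$ with explicit ODE analysis of the principal mode yielding $\beta=\lim b(x)>0$, and a weighted-strip estimate for the residual $v$. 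In effect you are proving the relevant special case of the cited Berestycki--Nirenberg proposition rather than invoking it. What your route buys is self-containedness and a transparent reason why $\beta>0$ (it comes directly from $u>0$ and monotonicity of $b$), whereas the paper outsources this to the reference; what the paper's route buys is brevity. Two remarks on your write-up: first, the upgrade of the modewise bounds $a_k=o(e^{-\pi x})$ ($k\ge2$) to a uniform bound on $v=u-a_1\sin(\pi y)$ and $v_x$ is only sketched --- the ``weighted-strip energy argument'' and the slab-wise $H^2\hookrightarrow L^\infty$ bootstrap should be carried out, and one must check that the implicit constants in the $a_k$-bounds do not deteriorate with $k$ when summing the series (this is fine because $|f_k(x)|\le 2\|u(x,\cdot)\|_{L^\infty}^3$ uniformly in $k$, but it deserves a sentence). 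Second, your uniqueness-in-the-ball argument for the symmetry $u\circ\sigma=u$ is not quite what the paper does: the computations there are run in the symmetric subspace with the constant $K^{\mathrm{sym}}$ from \eqref{Ksym}, so the solution produced is symmetric by construction; your argument would need the full-space constant $K$, which the paper does not verify. The conclusion $u\circ\sigma=u$ is nevertheless correct, so the $y\to-\infty$ reduction stands.
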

\begin{proof}
For the proof it is sufficient to show that $u\in C(\,\overline\Omega\,\backslash [-2.5,2.5]^2)$ and $u(x,y)\to 0$ as
$x\to\infty$ uniformly in $y$ (and similarly $u(x,y)\to 0$ as $y \to -\infty$ uniformly in $x$). Then Prop. 4.2. in
\cite{BerNir} can be applied to the half strips $\Omega\cap\{x>2.5\}$ and $\Omega\cap\{y<-2.5\}$, and implies the theorem. 

So let $r=\tfrac 1{\sqrt{2}}+\eps$ (for some sufficiently small $\eps>0$) and let
$\Omega_c\subset\Omega$ be a $C^2$-domain such that
$\{(x,y)\in\Omega:\
2.5<x<3.5\}\subset \Omega_c\subset \{(x,y)\in\Omega:\ 2<x<4\}$ and $\dist\left (\Omega_c\cap B_r(3,\tfrac
12),(\partial\Omega_c)\cap \Omega\right )>0$. Choose moreover a cutoff-function $\zeta\in
C^\infty(\overline\Omega_c)$ such that
$\zeta=0$ on $(\partial\Omega_c)\cap\Omega$ and $\zeta=1$ in $\Omega_c\cap B_r(3,\tfrac 12)$.

Define now $\tilde u=u\zeta\in H_0^1(\Omega_c)$; then for every $\varphi\in H_0^1(\Omega_c)$ we have, since 
$\int_{\Omega_c}\nabla u\cdot\nabla (\zeta \varphi)\,dx=\int_{\Omega_c}u^3 \zeta \varphi\,dx$,
\begin{align*}
\int_{\Omega_c}\nabla\tilde u\cdot\nabla \varphi\,dx & =\int_{\Omega_c}\left (\zeta\nabla u\cdot\nabla\varphi+u\nabla\zeta\cdot\nabla\varphi\right )\,dx\\
& =\int_{\Omega_c}\nabla u\cdot \nabla (\zeta\varphi)\,dx-\int_{\Omega_c}\left (2\nabla u\cdot\nabla\zeta+u\Delta\zeta\right )\varphi\,dx
=\int_{\Omega_c}f\varphi\,dx,
\end{align*}
where $f:=u^3\zeta-2\nabla u\cdot\nabla\zeta-u\Delta \zeta\in L^2(\Omega_c)$.

Then Theorem 9.8 in \cite{Agmon} yields (with $\Omega_1:=\Omega_c\cap B_r(3,\tfrac12)$):
$$\tilde u\in H^2(\Omega_1) \quad\text{and}\quad 
\|\tilde u\|_{H^2(\Omega_1)}\leq\gamma\left (\|\tilde u\|_{L^2(\Omega_c)}+\|f\|_{L^2(\Omega_c)}\right )
$$
for some constant $\gamma$ not depending on $u$.

Since $\tilde u=u$ in $\Omega_1$, $\zeta\in C^\infty(\overline\Omega_c)$ and $\|f\|_{L^2(\Omega_c)}\leq C\|u\|_{H^1(\Omega_c)}$, we obtain
$$u\in H^2(\Omega_1) \quad\text{and}\quad 
\|u\|_{H^2(\Omega_1)}\leq\widehat C\, \|u\|_{H^1(\Omega_c)}. 
$$
Define now 
$$u_n(x,y):=u(x+n,y),\quad n\in\N,\ (x,y)\in\Omega_c.$$
Then, since $\int_{\Omega_c}\nabla u_n\cdot\nabla\varphi\,dx=\int_{\Omega_c}u_n^3\varphi\,dx$ for $\varphi\in H_0^1(\Omega_c)$, by the above we obtain 
$$u_n\in H^2(\Omega_1)\quad\text{and}\quad \|u_n\|_{H^2(\Omega_1)}\leq \widehat C\,\|u_n\|_{H^1(\Omega_c)}.
$$
Using the embedding $H^2(\Omega_1)\hookrightarrow C(\overline{\Omega}_1)$ implies (with $K$
denoting the embedding constant):
$$u_n\in C(\overline\Omega_1)\quad \text{and}\quad \|u_n\|_{C(\overline{\Omega}_1)}\leq K\widehat C \,\|u_n\|_{H^1(\Omega_c)}. $$
Therefore we have $u\in C(\overline\Omega\cap\{x>2.5\})$. Moreover, $\|u_n\|_{H^1(\Omega_c)}=\|u\|_{H^1(\Omega_c+(n,0))}\to 0$ $(n\to\infty)$ since $u\in H^1(\Omega)$, whence
$\|u_n\|_{C(\overline{\Omega}_1)}\to 0$ as $n\to\infty$ follows, implying $u\to 0$ as $x\to\infty$ uniformly in $y$. 
\end{proof}
%
%
%


\bibliographystyle{siam}
\bibliography{bibfile}
\end{document}